\newtheorem{thm}{Theorem}
\newtheorem{lemma}{Lemma}
\newtheorem{prop}{Proposition}
\newtheorem{assumption}{Assumption}
\theoremstyle{definition}
\newtheorem{example}{Example}
\begin{document}
\title{Efficient simulation of density and probability of large deviations of sum of random vectors using saddle point representations}

\author{
  Santanu Dey\\
  \texttt{dsantanu@tcs.tifr.res.in}
\and
  Sandeep Juneja\\
  \texttt{juneja@tifr.res.in}
\and
  Ankush Agarwal\\
  \texttt{ankush@tcs.tifr.res.in}
}
\maketitle

\begin{abstract}
  We consider the problem of efficient simulation estimation of the
  density function at the tails, and the probability of large
  deviations for a sum of independent, identically distributed,
  light-tailed and non-lattice random vectors.  The latter problem
  besides being of independent interest, also forms a building block
  for more complex rare event problems that arise, for instance, in
  queuing and financial credit risk modeling.  It has been extensively
  studied in literature where state independent exponential twisting
  based importance sampling has been shown to be asymptotically
  efficient and a more nuanced state dependent exponential twisting
  has been shown to have a stronger bounded relative error property.
  We exploit the saddle-point based representations that exist for
  these rare quantities, which rely on inverting the characteristic
  functions of the underlying random vectors.  These
  representations reduce the rare event estimation problem to
  evaluating certain integrals, which may via importance sampling be
  represented as expectations.  Further, it is easy to identify and
  approximate the zero-variance importance sampling distribution to
  estimate these integrals.  We identify such importance sampling
  measures and show that they possess the asymptotically vanishing
  relative error property that is stronger than the bounded relative
  error property. To illustrate the broader applicability of the
  proposed methodology, we extend it to similarly efficiently estimate
  the practically important {\em expected overshoot} of sums of iid
  random variables.
\end{abstract}
\vspace{0.2in}

\vspace{0.2in}
\section{Introduction} 
Let $(X_i: i \geq 1)$ denote a sequence of independent, identically
distributed (iid) light tailed (their moment generating function is
finite in a neighborhood of zero) non-lattice (modulus of their
characteristic function is strictly less than one) random vectors
taking values in $\Re^d$, for $d \geq 1$. In this paper\footnote{A
  very preliminary version of this paper appeared as \cite{dey_jun}.}
we consider the problem of efficient simulation estimation of the
probability density function of $\bar{X}_n = \frac{1}{n}\sum_{i=1}^n
X_i$ at points away from $EX_i$, and the tail probability $P(\bar{X}_n
\in \mathcal{A})$ for sets $\mathcal{A}$ that do not contain $EX_i$
and essentially are affine transformations of the non-negative orthant
of $\Re^d$. We develop an efficient simulation estimation methodology
for these rare quantities that exploits the well known saddle point
representations for the probability density function of $\bar{X}_n$
obtained from Fourier inversion of the characteristic function of
$X_1$ (see e.g., \cite{butler}, \cite{daniels} and
\cite{jensen}). Furthermore, using Parseval's relation, similar
representations for $P(\bar{X}_n \in \mathcal{A})$ are easily
developed. To illustrate the broader applicability of the proposed
methodology, we also develop similar representation for $E(\bar{X}_n:
\bar{X}_n \geq a)$\footnote{Authors thank the editor for suggesting
  this application} in a single dimension setting $(d=1)$, for $a>
EX_i$, and using it develop an efficient simulation methodology for
this quantity as well.

The problem of efficient simulation estimation of the tail probability
density function has not been studied in the literature, although,
from practical viewpoint its clear that visual inspection of shape of
such density functions provides a great deal of insight into the tail
behavior of the sums of random variables. Another potential
application maybe in the maximum likelihood framework for parameter
estimation where closed form expressions for density functions of
observed outputs are not available, but simulation based estimators
provide an accurate proxy.  The problem of efficiently estimating
$P(\bar{X}_n \in \mathcal{A})$ via importance sampling, besides being
of independent importance, may also be considered a building block for
more complex problems involving many streams of i.i.d. random
variables (see e.g., \cite{parekh:walrand}, for a queuing
application; \cite{glasserman:li} for applications in credit risk
modeling). This problem has been extensively studied in rare event
simulation literature (see e.g., \cite{blan:gly},
\cite{dieker:mandjes}, \cite{glasserman:juneja},
\cite{glasserman:wang}, \cite{sadowsky}, \cite{sadowsky:bucklew}).
Essentially, the literature exploits the fact that the zero variance
importance sampling estimator for $P(\bar{X}_n \in \mathcal{A})$,
though unimplementable, has a Markovian representation. This
representation may be exploited to come up with provably efficient,
implementable approximations (see \cite{asmussen:glynn} and
\cite{juneja:shaha}).

Sadowsky and Bucklew in \cite{sadowsky:bucklew} (also see
\cite{bucklew:ney:sadowsky}) developed exponential twisting based
importance sampling algorithms to arrive at unbiased estimators for
$P(\bar{X}_n \in \mathcal{A})$ that they proved were asymptotically or
weakly efficient (as per the current standard terminology in rare
event simulation literature, see e.g., \cite{asmussen:glynn} and
\cite{juneja:shaha} for an introduction to rare event simulation.
Popular efficiency criteria for rare event estimators are also
discussed later in Section \ref{popular:measure}). The importance
sampling algorithms proposed by \cite{sadowsky:bucklew} were state
independent in that each $X_{k+1}$ was generated from a distribution
independent of the previously generated $(X_i: i \leq k)$. Blanchet,
Leder and Glynn in \cite{blan:gly} also considered the problem of
estimating $P(\bar{X}_n \in \mathcal{A})$ where they introduced state
dependent, exponential twisting based importance sampling
distributions (the distribution of generated $X_{k+1}$ depended on the
previously generated $(X_i: i \leq k)$). They showed that, when done
correctly, such an algorithm is strongly efficient, or equivalently
has the bounded relative error property.

The problem of efficient estimation of the expected overshoot $E\left
  [(\bar{X}_n-a): \bar{X}_n \geq a \right ]$ is of considerable
importance in finance and insurance settings. To the best of our
knowledge, this is the first paper that directly tackles this
estimation problem.

As mentioned earlier, in this article we exploit the saddle point
based representations of the rare event quantities considered. These
representations allow us to write the quantity of interest $\alpha_n$
as a product $c_n \times \beta_n$ where $c_n \sim \alpha_n$ (that is,
$c_n/\alpha_n \rightarrow 1$ as $n \rightarrow \infty$) and is known
in closed form. So the problem of interest is estimation of $\beta_n$,
which is an integral of a known function. Note that $\beta_n
\rightarrow 1$ as $n \rightarrow \infty$. In the literature,
asymptotic expansions for $\beta_n$ exist, however they require
computation of third and higher order derivatives of the log-moment
generating function of $X_i$. This is particularly difficult in higher
dimensions. In addition, it is difficult to control the bias in such
approximations. As we note later in numerical experiments, these
biases can be significant even when probabilities are as small as of
order $10^{-9}$. In the insurance and financial industry, simulation,
with its associated variance reduction techniques, is the preferred
method for tail risk measurement even when asymptotic approximations
are available (since these approximations are typically poor in the
range of practical interest; see e.g., \cite{glasserman:li}).

In our analysis, we note that the integral $\beta_n$ can be expressed
as an expectation of a random variable using importance
sampling. Furthermore, the zero variance estimator for this
expectation is easily ascertained. We approximate this estimator by an
implementable importance sampling distribution and prove that the
resulting unbiased estimator of $\alpha_n$ has the desirable
asymptotically vanishing relative error property. More tangibly, the
estimator of the integral $\beta_n$ has the property that its variance
converges to zero as $n \rightarrow \infty$.  An additional advantage
of the proposed approach over existing methodologies for estimating
$P(\bar{X}_n \in \mathcal{A})$ and related rare quantities is that
while these methods require $O(n)$ computational effort to generate
each sample output, our approach per sample requires small and fixed
effort independent of $n$.

The use of saddle point methods to compute tail probabilities has a
long and rich history (see e.g., \cite{butler},
\cite{lugnnani:rice} and \cite{jensen}). To the best of our knowledge
the proposed methodology is the first attempt to combine the expanding
literature on rare event simulation with the classical theory of
saddle point approximations.

The rest of the paper is organized as follows: In Section 2 we briefly
review the popular performance evaluation measures used in rare event
simulation, and the existing literature on estimating $P(\bar{X}_n \in
\mathcal{A})$.  Then, in Section 3, we develop an importance sampling
estimator for the density of $\bar{X}_n$ and show that it has
asymptotically vanishing relative error. In Section 4, we devise an
integral representation for $P(\bar{X}_n \in \mathcal{A})$ and develop
an importance sampling estimator for it and again prove that it has
asymptotically vanishing relative error.  In this section we also
discuss how this methodology can be adapted to similarly efficiently
estimate $E(\bar{X}_n: \bar{X}_n \geq a)$ in a single dimension
setting. In Section 5 we report the results of a few numerical
experiments to support our analysis. We end with a brief conclusion
and a discussion on some directions for future research in Section 6.

\section{Rare event simulation, a brief
  review}

Let $\alpha_n= E_nY_n= \int Y_n dP_n$ be a sequence of rare event
expectations in the sense that $\alpha_n \rightarrow 0$ as $ n
\rightarrow \infty$, for non-negative random variables $(Y_n: n \geq
1)$.  Here, $E_n$ is the expectation operator under $P_n$.  For
example, when $\alpha_n= P(B_n)$, $Y_n$ corresponds to the indicator
of the event $B_n$.

Naive simulation for estimating $\alpha_n$ requires generating many
iid samples of $Y_n$ under $P_n$. Their average then provides an
unbiased estimator of $\alpha_n$. Central limit theorem based
approximations then provide an asymptotically valid confidence
interval for $\alpha_n$ (under the assumption that $E_nY_n^2<\infty$).

Importance sampling involves expressing $\alpha_n= \int Y_n L_n
d\tilde{P}_n= \tilde{E}_n [Y_nL_n]$, where $\tilde{P}_n$ is another
probability measure such that $P_n$ is absolutely continuous
w.r.t. $\tilde{P}_n$, with $L_n= \frac{d P_n}{d \tilde{P}_n}$ denoting
the associated Radon-Nikodym derivative, or the likelihood ratio, and
$\tilde{E}_n$ is the expectation operator under $\tilde{P}_n$.  The
importance sampling unbiased estimator $\hat{\alpha}_n$ of $\alpha_n$
is obtained by taking an average of generated iid samples of $Y_n L_n$
under $\tilde{P}_n$.  Note that by setting
\[
d\tilde{P}_n = \frac{Y_n}{E_n(Y_n)}d P_n
\]
the simulation output $Y_n L_n$ is $E_n(Y_n)$ almost surely,
signifying that such a $\tilde{P}_n$ provides a zero variance
estimator for $\alpha_n$.

\subsection{Popular performance measures}
\label{popular:measure}
Note that the relative width of the confidence interval obtained using
the central limit theorem approximation is proportional to the ratio
of the standard deviation of the estimator divided by its mean.
Therefore, the latter is a good measure of efficiency of the
estimator.  Note that under naive simulation, when $Y_n= I(B_n)$ (For
any set $D$, $I(D)$ denotes its indicator), the standard deviation of
each sample of simulation output equals $\sqrt{\alpha_n(1-\alpha_n)}$
so that when divided by $\alpha_n$, the ratio increases to infinity as
$\alpha_n \rightarrow 0$.

Below we list some criteria that are popular in evaluating the
efficacy of the proposed importance sampling estimator (see
\cite{asmussen:glynn}).  Here, $Var(\hat{\alpha}_n)$ denotes the
variance of the estimator $\hat{\alpha}_n$ under the appropriate
importance sampling measure.

A given sequence of estimators $(\hat{\alpha}_n: n \geq 1)$ for
quantities $(\alpha_n: n \geq 1)$ is said
\begin{itemize}
\item to be {\em weakly efficient} or {\em asymptotically efficient}
  if
  \[
  \limsup_{n \rightarrow \infty} \frac{\sqrt{Var(\hat{\alpha}_n)}
  }{\alpha_n^{1-\epsilon}} < \infty
  \]
  for all $\epsilon>0$;
\item to be {\em strongly efficient } or to have {\em bounded relative
    error} if
  \[
  \limsup_{n \rightarrow \infty} \frac{\sqrt{Var(\hat{\alpha}_n)}
  }{\alpha_n} < \infty;
  \]
\item to have {\em asymptotically vanishing relative error} if
  \[
  \lim_{n \rightarrow \infty} \frac{\sqrt{Var(\hat{\alpha}_n)}
  }{\alpha_n} = 0.
  \]
\end{itemize}

\subsection{Literature review}

Recall that $(X_i: i \geq 1)$ denote a sequence of independent,
identically distributed light tailed random vectors taking values in
$\Re^d$. Let $(X_i^1, \ldots, X_i^d)$ denote the components of $X_i$,
each taking value in $\Re$.  Let $F(\cdot)$ denote the distribution
function of $X_i$.  Denote the moment generating function of $F$ by
$M(\cdot)$, so that
$$M(\theta):=E\left[e^{\theta\cdot X_1}\right]=E[e^{\theta_1X_1^1+\theta_2X_1^2+\cdots+\theta_dX_1^d}],$$
where $\theta=(\theta_1,\theta_2,\ldots,\theta_d)$ and for $x,
y\in\Re^d$ the Euclidean inner product between them is denoted by
$$x\cdot y:=x_1y_1+x_2y_2+\cdots + x_dy_d.$$
The characteristic function (CF) of $X_i$ is given by
$$\varphi(\theta):=E\left[e^{\iota\theta\cdot X_1}\right]=E[e^{\iota(\theta_1X_1^1+\theta_2X_1^2+\cdots+\theta_dX_1^d)}]$$
where $\iota=\sqrt{-1}$.  In this paper we assume that the
distribution of $X_i$ is non-lattice, which means that
$|\varphi(\theta)|<1$ for all $\theta\in\Re^d-\{0\}$.

Let $\Lambda(\theta):=\ln M(\theta)$ denote the cumulant generating
function (CGF) of $X_i$.  We define $\Theta$ to be the effective
domain of $M(\theta)$, that is
$$\Theta:=\left\{\theta=(\theta_1,\theta_2,\ldots,\theta_d)\in\Re^d|\Lambda(\theta)<\infty\right\}.$$
Throughout this article we assume that $0 \in \Theta^0$, the interior
of $\Theta$.

The large deviations rate function (see e.g., \cite{dembo}) associated
with $X_i$ is defined as
$$\Lambda^*(x) = \sup_{\theta \in\Re^d} ( \theta \cdot x - \Lambda(\theta)).$$
This can be seen to equal $\tilde{\theta}\cdot x -
\Lambda(\tilde{\theta})$ whenever there exists $\tilde{\theta}
\in\Theta^0$ such that $\Lambda'(\tilde{\theta})= x$.  (Here,
$\Lambda'$ denotes the gradient of $\Lambda$).  Now consider the
problem of estimating $P(\bar{X}_n \in \mathcal{A})$.  Let
$dF_{\theta}(x) = \exp(\theta \cdot x - \Lambda(\theta)) d F(x)$
denote the exponentially twisted distribution associated with $F$ when
the twisting parameter equals $\theta$.  Let $x_0$ denote the $\arg
\min_{x \in \mathcal{A}} \Lambda^*(x)$. Furthermore, let
$\theta^*\in\Theta^0$ solve the equation $\Lambda'(\theta)= x_0$.
Under the assumption that such a $\theta^*$ exists,
\cite{sadowsky:bucklew} propose an importance sampling measure under
which each $X_i$ is iid with the new distribution function
$F_{\theta^*}$.  Then, they prove that under this importance sampling
measure, when $\mathcal{A}$ is convex, the resulting estimator of
$P(\bar{X}_n \in \mathcal{A})$ is weakly efficient.  See
\cite{asmussen:glynn} and \cite{juneja:shaha} for a sense in which
this distribution approximates the zero variance estimator for
$P(\bar{X}_n \in \mathcal{A})$.  Since, $\Lambda'(\theta^*)= x_0$, it
is easy to see that under the exponentially twisted distribution
$F_{\theta^*}$, each $X_i$ has mean $x_0$.

As mentioned in the introduction, \cite{blan:gly} consider a variant
importance sampling measure where the distribution of $X_j$ depends on
the generated $(X_1, \ldots, X_{j-1})$.  Modulo some boundary
conditions, they choose an exponentially twisted distribution to
generate $X_j$ so that its mean under the new distribution equals
$\frac{1}{n-j+1}(nx_0 - \sum_{i=1}^{j-1}X_i)$. They prove that the
resulting estimator is strongly efficient under the restriction that
$\mathcal{A}$ is convex and has a twice continuously differentiable
boundary.  Later in Section~5, we compare the performance of the
proposed algorithm to the one based on exponential twisting developed
by \cite{sadowsky:bucklew} as well as with that proposed by
\cite{blan:gly}.

\section{Efficient estimation of probability density function of  $\bar{X}_n$} 

In this section we first develop a saddle point based representation
for the probability density function (pdf) of $\bar{X}_n$ in
Proposition~\ref{rep:density} (see e.g., \cite{butler},
\cite{daniels} and \cite{jensen}).  We then develop an approximation
to the zero variance estimator for this pdf. Our main result is
Theorem~\ref{mainthm0}, where we prove that the proposed estimator has
an asymptotically vanishing relative error.

Some notation is needed in our analysis.  Let
$$ {\Re_+^d}:=\{(x_1,x_2,\ldots,x_d)\in\Re^d|\,\,x_i\geq0\,\,\, \forall i=1,2,\ldots d\}.$$
Denote the Euclidean norm of $x\in \Re^d$ by $|x|:=\sqrt{x\cdot x}$.
For a square matrix $A$, $\text{det}(A)$ will denote the determinant
of $A$, while norm of $A$ is denoted by
$$||A||:=\max_{|x|=1} |Ax|\,.$$
Let $\Lambda''(\theta)$ denote the Hessian of $\Lambda(\theta)$ for
$\theta \in \Theta^0$.  Whenever, this is strictly positive definite,
let $A(\theta)$ be the inverse of the unique square root of
$\Lambda''(\theta)$.

\begin{prop}
  \label{rep:density}
  Suppose $\Lambda''(\theta)$ is strictly positive definite for some
  $\theta\in\Theta^0$.  Furthermore, suppose that $|\varphi|^\gamma$
  is integrable for some $\gamma\geq 1$.  Then $f_n$, the density
  function of $\bar{X}_n$, exists for all $n\geq\gamma$ and its value
  at any point $x_0$ is given by:
  \begin{equation}
    \label{density_estimate}
    f_n(x_0)=\left(\frac{n}{2\pi}\right)^{\frac{d}{2}}\frac{\exp\left[n\left\{\Lambda(\theta)-\theta\cdot x_0\right\}\right]}{\sqrt{\text{det}(\Lambda''(\theta))}}\int_{v\in \Re^d}\psi(n^{-\frac{1}{2}}A(\theta)v,\theta,n)\times\phi(v)\,dv,
  \end{equation}
  where
$$\psi(y,\theta,n)=\exp\left[n\times\eta(y,\theta)\right]$$
and
\begin{equation}
  \label{key:function}
  \eta(y,\theta)=\frac{1}{2}y^t\Lambda''(\theta)y+\Lambda\left(\theta+\iota y\right)-(\theta+\iota y)\cdot x_0-\Lambda(\theta)+\theta\cdot x_0.
\end{equation}
\end{prop}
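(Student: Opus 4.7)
The plan is to derive the stated representation from the Fourier inversion formula for $f_n$ by shifting the inversion contour by $-\iota\theta$ and then normalizing the resulting Gaussian-like integrand via the affine change of variables induced by $A(\theta)$. The three ingredients are (i) Fourier inversion for the density, (ii) a multivariate contour shift in $\mathbb{C}^d$, and (iii) a substitution on $\Re^d$.

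First I would verify that $f_n$ exists for $n\ge\gamma$ by Fourier inversion. The characteristic function of $\bar X_n$ at $t\in\Re^d$ is $\varphi(t/n)^n$. Using $|\varphi|\le 1$ together with the hypothesis $\int|\varphi|^{\gamma}<\infty$, the substitution $u=t/n$ gives
\[
\int_{\Re^d}|\varphi(t/n)|^n\,dt \;=\; n^d\int_{\Re^d}|\varphi(u)|^n\,du \;\le\; n^d\int_{\Re^d}|\varphi(u)|^\gamma\,du \;<\;\infty,
\]
so Fourier inversion applies and, after the same substitution,
\[
f_n(x_0)\;=\;\frac{n^d}{(2\pi)^d}\int_{\Re^d} e^{-\iota n u\cdot x_0}\,e^{n\Lambda(\iota u)}\,du,
\]
using $\varphi(u)=e^{\Lambda(\iota u)}$.

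Next I would shift the contour coordinate by coordinate, replacing $u$ with $v-\iota\theta$ for $v\in\Re^d$. Because $\theta\in\Theta^0$, the function $z\mapsto\Lambda(z)$ is holomorphic on a complex neighborhood of the real segment $\{r\theta: r\in[0,1]\}$, so the integrand is holomorphic on a suitable tube in $\mathbb{C}^d$. I would apply Cauchy's theorem to a tall box of imaginary width $\theta_j$ in the $j$th direction, invoking the bound $|M(r\theta+\iota v)|\le M(r\theta)$ (a consequence of the triangle inequality for the integral defining $M$) together with the $|\varphi|^{\gamma}$-integrability to send the real walls to infinity by dominated convergence. Since $\iota(v-\iota\theta)=\theta+\iota v$, the shifted exponent becomes $n[\Lambda(\theta+\iota v)-(\theta+\iota v)\cdot x_0]$, yielding
\[
f_n(x_0)\;=\;\frac{n^d}{(2\pi)^d}\int_{\Re^d} e^{n[\Lambda(\theta+\iota v)-(\theta+\iota v)\cdot x_0]}\,dv.
\]

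Finally I would pull out the saddle-point factor and rescale. Adding and subtracting $n[\Lambda(\theta)-\theta\cdot x_0]$ inside the exponent and then substituting $v=n^{-1/2}A(\theta)w$ produces a Jacobian $n^{-d/2}/\sqrt{\det\Lambda''(\theta)}$ and converts the quadratic form $\tfrac{n}{2}v^{t}\Lambda''(\theta)v$ into $\tfrac{1}{2}|w|^2$, because $A(\theta)$ is the symmetric positive-definite square root of $\Lambda''(\theta)^{-1}$. After a further addition and subtraction of this quadratic term, the residual exponent is exactly $n\,\eta(n^{-1/2}A(\theta)w,\theta)$, the factor $e^{-|w|^2/2}$ combines with $(2\pi)^{-d/2}$ to form $\phi(w)$, and collecting prefactors gives the product in \eqref{density_estimate}.

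The main obstacle is the rigorous justification of the contour shift in the second step: the integrand is not absolutely integrable uniformly on the vertical walls of the box, and one must combine the analyticity of $\Lambda$ on $\Theta^0$, the pointwise bound $|M(r\theta+\iota v)|\le M(r\theta)$, and the $|\varphi|^\gamma$-integrability from the non-lattice hypothesis (applied at $r=0$) to push the walls to infinity. Once this shift is established, the remainder is purely algebraic bookkeeping.
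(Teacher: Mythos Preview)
Your proposal is correct and follows essentially the same route as the paper: Fourier inversion for $f_n$, the substitution $t=nu$, a contour shift via Cauchy's theorem to the line through $\theta$, and then the affine rescaling $v\mapsto n^{-1/2}A(\theta)w$ to produce the Gaussian weight $\phi$ and the stated prefactors. The paper carries out exactly these steps, in fact with less commentary than you provide---it simply writes ``by Cauchy's theorem'' at the shift, whereas you correctly flag this as the only analytically nontrivial point and sketch the box-contour argument with the bound $|M(r\theta+\iota v)|\le M(r\theta)$.
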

\begin{proof}
  \begin{eqnarray}
    f_n(x_0)  &=& \left(\frac{1}{2\pi}\right)^d\int_{t\in\Re^d}M_{\bar{X}_n}(\iota t)e^{-\iota (t\cdot x_0)}\,d t\,\,\,\,[M_{\bar{X}_n}\,\,\text{is the MGF of}\,\bar{X}_n]\label{inversion:formula}\\
    &=& \left(\frac{1}{2\pi}\right)^d\int_{t\in\Re^d}M^n\left(\frac{\iota t}{n}\right)e^{-\iota (t\cdot x_0)}\,d t\,\,\,\,[M_{\bar{X}_n}\,\,\text{written in terms of}\,\,M]\nonumber\\
    &=& \left(\frac{n}{2\pi}\right)^d\int_{s\in\Re^d}M^n(\iota s)e^{-n\iota (s\cdot x_0)}\,d s\,\,\,\,[\text{substituting}\,\, s=\frac{t}{n}]\nonumber\\
    &=& \left(\frac{n}{2\pi\iota}\right)^d\int_{\theta_1-\iota\infty}^{\theta_1+\iota\infty}\int_{\theta_2-\iota\infty}^{\theta_2+\iota\infty}\cdots\int_{\theta_d-\iota\infty}^{\theta_d+\iota\infty}e^{n[\Lambda(s)-s\cdot x_0]}\,ds_1ds_2\cdots ds_d\label{cauchy}\\
    &=& \left(\frac{n}{2\pi\iota}\right)^d\int_{y\in\Re^d}\exp\left[n\left\{\Lambda(\theta+\iota y)-(\theta+\iota y)\cdot x_0\right\}\right]\,(\iota)^d d y\nonumber\\
    &=& \left(\frac{n}{2\pi}\right)^d\exp\left[n\left\{\Lambda(\theta)-\theta\cdot x_0\right\}\right]\int_{y\in\Re^d}\psi(y,\theta,n)\times \exp\left\{-n\frac{1}{2}y^t\Lambda''(\theta)y\right\}\,dy\nonumber\\
    &=& \left(\frac{n}{2\pi}\right)^{\frac{d}{2}}\exp\left[n\left\{\Lambda(\theta)-\theta\cdot x_0\right\}\right]\int_{w\in\Re^d} \psi(n^{-\frac{1}{2}}w,\theta,n)\times\phi(A(\theta)^{-1}w)\,dw\label{w_integral}\\
    &=& \left(\frac{n}{2\pi}\right)^{\frac{d}{2}}\frac{\exp\left[n\left\{\Lambda(\theta)-\theta\cdot x_0\right\}\right]}{\sqrt{\text{det}(\Lambda''(\theta))}}\int_{v\in\Re^d}\psi(n^{-\frac{1}{2}}A(\theta)v,\theta,n)\times\phi(v)\,dv\,,\label{v_integral}
  \end{eqnarray}
  where the equality in (\ref{inversion:formula}), which holds for all
  $n\geq\gamma$, is the inversion formula applied to the
  characteristic function of $\bar{X}_n$ (see e.g,
  \cite{feller2}).  The assumption that $|\varphi|^\gamma$ is
  integrable ensures that $|M(\frac{\iota t}{n})|^n$, which is the
  characteristic function of $\bar{X}_n$, is an integrable function of
  $t$ for all $n\geq\gamma$. The equality in (\ref{cauchy}) holds, by
  Cauchy's theorem, for any
  $\theta=(\theta_1,\theta_2,\ldots,\theta_d)$ in the interior of
  $\Theta$.  The substitution $y=n^{-\frac{1}{2}}w$ gives
  (\ref{w_integral}), while (\ref{v_integral}) follows from
  (\ref{w_integral}) by the substitution $w=A(\theta)v$.
\end{proof}

For a given $x_0 \in \Re^d, x_0 \ne EX_1$, suppose that the solution
$\theta^*$ to the equation $\Lambda'(\theta)= x_0$ exists and
$\theta^*\in\Theta^0$.  Then, the expansion of the integral in
(\ref{density_estimate}) is available.  For example, the following is
well-known:
\begin{prop}\label{asymptotic1}
  Suppose $\Lambda''(\theta^*)$ is strictly positive definite and
  $|\varphi|^\gamma$ is integrable for some $\gamma\geq 1$.  Then,
  \begin{equation}
    \int_{v\in\Re^d}\psi(n^{-\frac{1}{2}}A(\theta^*)v,\theta^*,n)\times\phi(v)\,dv = 1 + o\left(\frac{1}{\sqrt{n}}\right)\,.
  \end{equation}
\end{prop}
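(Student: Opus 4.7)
The plan is to exploit the fact that at the saddle point $\theta^*$, the definition of $\eta(y,\theta^*)$ is engineered so that both its value at $y=0$ and its quadratic term in $y$ vanish, leaving a remainder of cubic order. Concretely, Taylor-expand $\Lambda(\theta^* + \iota y)$ about $\theta^*$ and use $\Lambda'(\theta^*) = x_0$; the linear term cancels $(\theta^* + \iota y)\cdot x_0 - \theta^*\cdot x_0 = \iota y\cdot x_0$, and the quadratic term $\frac{1}{2}(\iota y)^t\Lambda''(\theta^*)(\iota y) = -\frac{1}{2}y^t\Lambda''(\theta^*)y$ is exactly cancelled by the $+\frac{1}{2}y^t\Lambda''(\theta^*)y$ that was inserted into the definition \eqref{key:function}. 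Hence near the origin,
\[
\eta(y,\theta^*) = -\frac{\iota}{6}\sum_{i,j,k}\Lambda_{ijk}(\theta^*)\,y_i y_j y_k + R_4(y),\qquad R_4(y) = O(|y|^4).
\]
Substituting $y = n^{-1/2}A(\theta^*)v$ gives $n\,\eta(y,\theta^*) = n^{-1/2}\,c_3(v) + n^{-1}\,c_4(v) + O(n^{-3/2}|v|^5)$, where $c_3$ is an odd homogeneous cubic in $v$ and $c_4$ an even quartic.

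Next I would Taylor-expand the exponential,
\[
\psi(n^{-1/2}A(\theta^*)v,\theta^*,n) = 1 + n^{-1/2}c_3(v) + n^{-1}\!\left[\tfrac{1}{2}c_3(v)^2 + c_4(v)\right] + \text{remainder},
\]
and integrate term by term against the standard Gaussian density $\phi(v)$. Since $c_3$ is odd in $v$, $\int c_3(v)\phi(v)\,dv = 0$, so the leading deviation of the integral from $1$ is of order $1/n$. This is stronger than the claimed $o(1/\sqrt{n})$ bound, so the claim will follow as soon as the heuristic expansion is justified rigorously.

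The main obstacle is precisely that justification, i.e.\ upgrading pointwise convergence to convergence of the integral. My plan is to split the region of integration into $|v|\leq \delta\sqrt{n}$ and its complement. On the inner region, for $n$ large enough $y = n^{-1/2}A(\theta^*)v$ lies in a small neighborhood of $0$ on which $\eta$ is analytic and the Taylor remainder estimates are uniform; one then uses a standard dominated convergence argument with a Gaussian majorant $e^{-c|v|^2}\phi(v)$ coming from the strict positive definiteness of $\Lambda''(\theta^*)$ (so that $\mathrm{Re}\,\eta(y,\theta^*)\le -c'|y|^2$ for small $y$). On the outer region I would use the non-lattice assumption together with integrability of $|\varphi|^\gamma$: writing $|\psi(y,\theta^*,n)| = |M(\theta^*+\iota y)/M(\theta^*)|^n$, this equals $|\varphi_{\theta^*}(y)|^n$ where $\varphi_{\theta^*}$ is the characteristic function of the exponentially tilted law $F_{\theta^*}$; since $F_{\theta^*}$ inherits the non-lattice property, $\sup_{|y|\geq \delta}|\varphi_{\theta^*}(y)| < 1$ on any fixed annulus and $|\varphi_{\theta^*}|^n$ is integrable for $n\geq \gamma$. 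A standard estimate then shows the outer integral decays faster than any polynomial in $1/n$.

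Combining the two pieces yields $\int \psi(n^{-1/2}A(\theta^*)v,\theta^*,n)\phi(v)\,dv = 1 + O(1/n)$, which is in particular $1 + o(1/\sqrt n)$, completing the proof. The only genuinely delicate step is the tail bound, and it reduces to the non-lattice/integrability machinery already assumed in the hypotheses of Proposition~\ref{rep:density}.
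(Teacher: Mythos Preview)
Your overall strategy---split the integral into an inner region $|v|\le\delta\sqrt n$ and its complement, exploit the oddness of the cubic term $c_3(v)$ to kill the $n^{-1/2}$ contribution, and control the outer region via the non-lattice property together with integrability of $|\varphi|^\gamma$---is exactly the route the paper takes, and your outer-region argument is correct.

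There is, however, a genuine error in your justification of the inner-region domination. You assert that strict positive-definiteness of $\Lambda''(\theta^*)$ gives $\mathrm{Re}\,\eta(y,\theta^*)\le -c'|y|^2$ for small $y$. This is false: by the very cancellation you correctly identify in your first paragraph, $\mathrm{Re}\,\eta(y,\theta^*)$ has \emph{no} quadratic part; it is $O(|y|^4)$ near the origin and its sign is governed by the fourth cumulant, which is not controlled. (Equivalently, the identity $e^{-|v|^2/2}\psi(n^{-1/2}A(\theta^*)v,\theta^*,n)=\varphi_{\theta^*}(n^{-1/2}A(\theta^*)v)^n$ shows only $|\psi|\le e^{|v|^2/2}$, which alone leaves no integrable Gaussian factor.)

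The correct mechanism for the majorant is the one the paper uses. Since $\eta(y,\theta^*)=O(|y|^3)$, one chooses $\delta$ small enough that $|\eta(y,\theta^*)|\le\tfrac{1}{8}\kappa_{\min}|y|^2$ for $|y|<\delta$; then on the inner region $n|\eta(n^{-1/2}A(\theta^*)v,\theta^*)|\le\tfrac{1}{8}|v|^2$, so $|\psi|\le e^{|v|^2/8}$ and $\phi(v)$ supplies the residual $e^{-3|v|^2/8}$. The paper packages this together with the needed rate via Lemma~\ref{keylemma}, the elementary inequality $|e^\lambda-1-\beta|\le(|\lambda-\beta|+\tfrac12|\beta|^2)\,e^{\max(|\lambda|,|\beta|)}$, applied with $\lambda=n\eta$ and $\beta$ the cubic term; this simultaneously yields the $o(n^{-1/2})$ bound on $|\lambda-\beta|$ and the integrable dominating function. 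Once you replace your incorrect $\mathrm{Re}\,\eta$ claim with this argument, your proof goes through and coincides with the paper's.
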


A proof of Proposition~\ref{asymptotic1} can be found in \cite{jensen}
(see also \cite{feller2}).  For completeness we include a proof in the
Appendix.  It is also useful in following proof of Proposition
\ref{asymptotic3}.  The proof uses the estimates (\ref{estimate1}),
(\ref{estimate2}), (\ref{estimate3}) and Lemma \ref{keylemma}
developed later in this section.

\subsection{Monte Carlo estimation}

The integral in (\ref{density_estimate}) may be estimated via Monte
Carlo simulation.  In particular, this integral may be re-expressed as
$$\int_{v\in\Re^d}\psi(n^{-\frac{1}{2}}A(\theta^*)v,\theta^*,n)\frac{\phi(v)}{g(v)}g(v)\,d v\,,$$
where $g$ is a density supported on $\Re^d$.  Now if
$V_1,V_2,\ldots,V_N$ are iid with distribution given by the density
$g$, then
\begin{equation}
  \label{estimator001}
  \hat{f}_n(\bar {x}):=\left(\frac{n}{2\pi}\right)^{\frac{d}{2}}\frac{\exp\left[n\left\{\Lambda(\theta^*)-\theta^*\cdot x_0\right\}\right]}{\sqrt{\text{det}(\Lambda''(\theta^*))}}\frac{1}{N}\sum_{i=1}^N\frac{\psi(n^{-\frac{1}{2}}A(\theta^*)V_i,\theta^*,n)\phi(V_i)}{g(V_i)}
\end{equation}
is an unbiased estimator for $f_n(x_0)$.

\subsubsection{Approximating the zero variance estimator\\}

Note that to get a zero variance estimator for the above integral we
need
$$g(v) \varpropto  \psi(n^{-\frac{1}{2}}A(\theta^*)v,\theta^*,n){\phi(v)}\,.$$
We now argue that
\begin{equation} \label{approx_small}
  \psi(n^{-\frac{1}{2}}A(\theta^*)v,\theta^*,n) \sim 1
\end{equation}
for all $v=o(n^{\frac{1}{6}})$. We may then select an IS density $g$
that is asymptotically similar to $\phi$ for $v=o(n^{\frac{1}{6}})$.
In the further tails, we allow $g$ to have fatter power law
tails. This ensures that large values of $V$ in the simulation do not
contribute substantially to the variance.

Further analysis is needed to see (\ref{approx_small}). Note from the
definition of $\eta(v,\theta)$, that
\begin{equation}
  \eta(0,\theta)=0, \,\,\,\,\eta''(0,\theta)=0\,\,\,\,\text{and}\,\,\,\eta'''(v ,\theta)=(\iota)^3\Lambda'''(\theta+\iota v) \label{property1}
\end{equation}
for all $\theta$, while
\begin{equation}
  \eta'(0,\theta^*)=0 \label{property2}
\end{equation}
for the saddle point $\theta^*$.  Here $\eta'$ , $\eta''$ and
$\eta'''$ are the first, second and third derivatives of $\eta$
w.r.t. $v$, with $\theta$ held fixed.  Note that while $\eta'$ and
$\eta''$ are $d$-dimensional vector and $d\times d$ matrix
respectively, $\eta'''(v,\theta)$ is the array of numbers:
$((\frac{\partial^3\eta}{\partial v_i\partial v_j\partial
  v_k}(v,\theta)))_{1\leq i,j,k\leq d}$.

The following notation aids in dealing with such quantities: If
$A=(a_{ijk})_{1\leq i,j,k\leq d}$ is a $d\times d\times d$ array of
numbers and $u=(u_1,u_2,\ldots,u_d)$ is a $d$-dimensional vector and
$B$ is a $d\times d$ matrix then we use the notation

$$A\odot u=\sum_{1\leq i,j,k\leq d}a_{ijk}u_iu_ju_k$$
and
$$A\star B=(c_{ijk})_{1\leq i,j,k\leq d}\,\,,$$
where
$$c_{ijk}=\sum_{m,n,p} a_{mnp}b_{mi}b_{nj}b_{pk}\,.$$
Following identity is evident:
\begin{equation}
  A\odot(Bu)=(A\star B)\odot u\,. \label{sily_identity}
\end{equation}
Since, it follows from the three term Taylor series expansion and
(\ref{property1},\ref{property2}) above, that
$$\psi(n^{-\frac{1}{2}}A(\theta^*)v,\theta^*,n)=\exp\left\{n \eta(n^{-\frac{1}{2}}A(\theta^*)v, \theta^*)\right\}
=\exp\left\{\frac{1}{6\sqrt{n}}\Lambda''' \left(\theta^*+ \iota
    n^{-\frac{1}{2}}A(\theta^*)\tilde{v}\right)\odot(\iota
  A(\theta^*)v)\right\}\,,$$ continuity of $\Lambda'''$ in the
neighborhood of $\theta^*$ implies (\ref{approx_small}).

\subsubsection{Proposed importance sampling density\\}
\label{proposedIS}
We now define the form of the IS density $g$.  We first show its
parametric structure and then specify how the parameters are chosen to
achieve asymptotically vanishing relative error.

For $a\in(0,\infty)$, $b\in(0,\infty)$, and $\alpha\in(1,\infty)$, set
\begin{equation}
  g(v) = \left\{
    \begin{array}{lr}
      b\times\phi(v) & \text{when}\,\, |v|<a\\
      \frac{C}{|v|^\alpha} & \text{when}\,\, |v|\geq a\,.
    \end{array}\label{ISform}
  \right.
\end{equation}
Note that if we put
$$p:=\int_{|v|<a} g(v)\,dv=b\int_{|v|<a}\phi(v)\,dv=b\times IG\left(\frac{d}{2}, \frac{a^2}{2}\right),$$
where
$$IG(\omega,x)=\frac{1}{\Gamma(\omega)}\int_0^x e^{-t}t^{\omega-1}\,dt$$
is the incomplete Gamma integral (or the Gamma distribution function,
see e.g, \cite{jensen}), then
$$C=\frac{(1-p)}{\int_{|v|\geq a}\frac{dv}{|v|^\alpha}}>0,$$
provided $p<1$.

\begin{figure}[htb]
{
\centering
\includegraphics[width=0.50\textwidth]{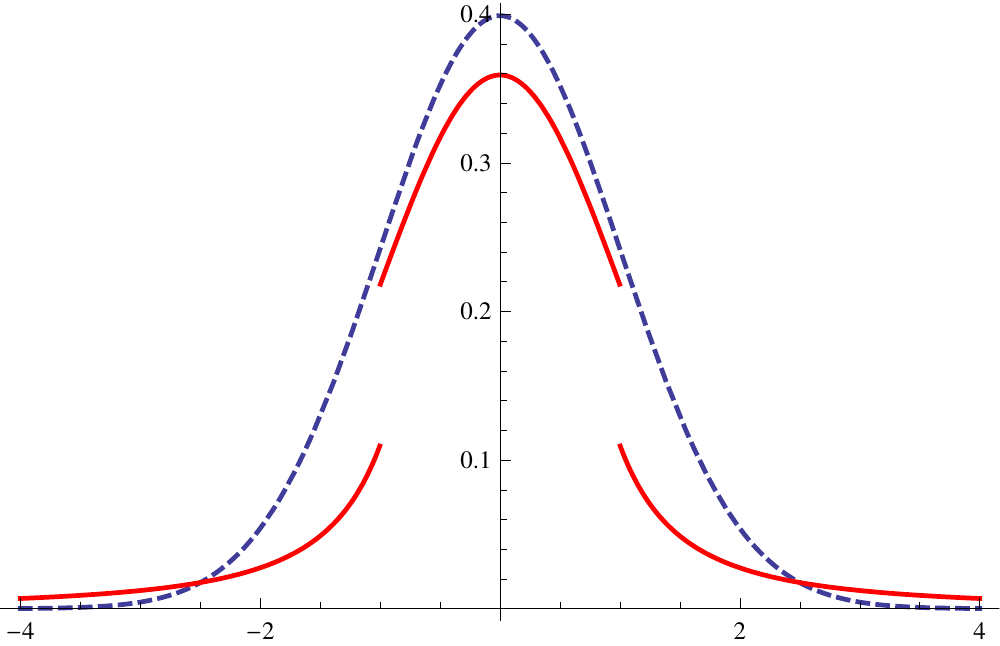}
\caption{Dotted curve is the normal density function, while solid line is the density of the proposed IS density.\label{figIS}}
}
\end{figure}

The following Assumption is important for coming up with the
parameters of the proposed IS density.
\begin{assumption}
  \label{main:assumption}
  There exist $\alpha_0>1$ and $\gamma\geq1$ such that
$$\int_{u\in\Re^d}|u|^{\alpha_0}\left|\varphi(u)\right|^\gamma\,du <\infty\,\,.$$
\end{assumption}

By Riemann-Lebesgue lemma, if the probability distribution of $X_1$ is
given by a density function, then $|\varphi(u) | \rightarrow 0$ as
$|u| \rightarrow \infty$.  Assumption~\ref{main:assumption} is easily
seen to hold when $|\varphi(u)|$ decays as a power law as $|u|
\rightarrow \infty$.  This is true, for example, for Gamma distributed
random variables.  More generally, this holds when the underlying
density has integrable higher derivatives (see \cite{feller2}): If
$k$-th order derivative of the underlying density is integrable then
for any $\alpha_0$, Assumption~\ref{main:assumption} holds with
$\gamma>\frac{1+\alpha_0}{k}$.

To specify the parameters of the IS density we need further analysis.

Define
$$\varphi_{\theta}(u):=E_{\theta}\left[e^{\iota u\cdot (X_1- x_0)}\right]
=e^{-\iota u\cdot x_0}\frac{M\left(\theta+\iota
    u\right)}{M(\theta)}\,\,,$$ where $E_{\theta}$ denotes the
expectation operator under the distribution $F_{\theta}$. Let
\begin{equation}
  \label{define:h}
  h(x):=1-\sup_{|u|\geq x}|\varphi_{\theta^*}(u)|^2.
\end{equation}

Then $0\leq h(x)\leq 1$, $h(0)=0$, $h(x)$ is continuous,
non-decreasing and $h(x)\uparrow 1$ as $x\downarrow 0$.  Further,
since $\varphi$ is the characteristic function of a non-lattice
distribution, $h(x)>0$ if $x>0$.  We define
$$h_1(y)=\min\{z\,|\,h(z)\geq y\}\,\,\text{for}\,\,y\in(0,1).$$
Then for any $y\in(0,1)$ we have $h(h_1(y))\geq y$ and
$h_1(z)\downarrow 0$ as $z\downarrow 0$.

Let $\{s_n\}_{n=1}^\infty$ be any sequence with following three
properties:
\begin{enumerate}
\item $s_n\downarrow 0$ as $n\rightarrow\infty$
\item For any $\beta$ positive, $(1-s_n)^n n^\beta\rightarrow 0$ as
  $n\rightarrow\infty$
\item $\sqrt{n}h_1(s_n)\rightarrow\infty$ as $n\rightarrow\infty$
\end{enumerate}

Later in Section 5 we discuss how such a sequence may be selected in
practice.  Set $\delta_3(n):=h_1(s_n)$.  Then, it follows that if
$x\geq\delta_3(n)$ then $h(x)\geq s_n$.  Equivalently,
$|\varphi_{\theta^*}(u)| < \sqrt{1-s_n}$ for all $u\geq\delta_3(n)$.

Let $\kappa_{min}$ and $\kappa_{max}$ denote the minimum and maximum
eigenvalue of $\Lambda''(\theta^*)$, respectively.  Hence
$\frac{1}{\kappa_{min}}$ is the maximum eigenvalue of
$\Lambda''(\theta^*)^{-1}=A(\theta^*)A(\theta^*)$.  Therefore, we have
$$\frac{1}{\kappa_{min}}=||A(\theta^*)||^2\,.$$

Next, put $\delta_2(n)=\sqrt{\kappa_{max}}\delta_3(n)$.  Then,
$\sqrt{n}\delta_2(n)\rightarrow \infty$ and $|v|\geq\delta_2(n)$
implies $|A(\theta^*)v|\geq\delta_3(n)$.  Also let
$$\delta_1(n)=\frac{1}{\sqrt{\kappa_{min}}}\delta_2(n)=\sqrt{\frac{\kappa_{max}}{\kappa_{min}}}\delta_3(n),$$
so that $|v|<\delta_2(n)$ implies $|A(\theta^*)v|<\delta_1(n)$.

Now we are in position to specify the parameters for the proposed IS
density.  Set
$$\alpha=\alpha_0$$
and
$$a_n=\sqrt{n}\delta_2(n).$$
Let $p_n=b_n\times IG\left(\frac{d}{2},\frac{a_n^2}{2}\right)$.  For
$g$ to be a valid density function, we need $p_n<1$.  Since
$IG\left(\frac{d}{2},\frac{a_n^2}{2}\right)\rightarrow 1$, select
$b_n$ to be a sequence of positive real numbers that converge to 1 in
such a way that $b_n<1/IG\left(\frac{d}{2},\frac{a_n^2}{2}\right)$ and
\begin{equation}
  \label{lim999}
  \lim_{n\rightarrow\infty}\frac{(1-s_n)^{n}n^{\frac{d+\alpha}{2}}}{\left[1-b_n\times IG\left(\frac{d}{2},\frac{a_n^2}{2}\right)\right]}=0.
\end{equation}
For example, $b_n = 1 - n^{-\xi}$ for any $\xi > 0$ satisfies
(\ref{lim999}).  For each $n$, let $g_n$ denote the pdf of the form
(\ref{ISform}) with parameters $\alpha$, $a_n$ and $b_n$ chosen as
above.  Let $E_n$ and $Var_n$ denote the expectation and variance,
respectively, w.r.t. the density $g_n$.

\begin{thm}
  \label{mainthm0}
  Suppose Assumption \ref{main:assumption} holds and
  $\theta^*\in\Theta^0$.  Then,
  \begin{equation*}
    E_n\left[\frac{\psi^2(n^{-\frac{1}{2}}A(\theta^*)V,\theta^*,n)\phi^2(V)}{g^2_n(V)}\right]=\int_{v\in\Re^d}\frac{\psi^2(n^{-\frac{1}{2}}A(\theta^*)v,\theta^*,n)\phi^2(v)}{g_n(v)}\,dv=1+o(n^{-\frac{1}{2}})\,.
  \end{equation*}
  Consequently, from Proposition \ref{asymptotic1}, it follows that
$$Var_n\left[\frac{\psi(n^{-\frac{1}{2}}A(\theta^*)V_i,\theta^*,n)\phi(V_i)}{g_n(V_i)}\right]\rightarrow 0\,\,\,\text{as}\,\,n\rightarrow \infty\,,$$
so that the proposed estimators for $(f_n(x_0): n\geq 1)$ have an
asymptotically vanishing relative error.
\end{thm}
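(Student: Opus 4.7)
The plan is to bound $I_n := \int_{\Re^d} |\psi(n^{-1/2}A(\theta^*)v,\theta^*,n)|^2\phi^2(v)/g_n(v)\,dv$ by first invoking a clean algebraic identity and then splitting along the piecewise definition of $g_n$. Using $2\operatorname{Re}\Lambda(\theta^*+\iota y) = 2\Lambda(\theta^*) + \ln|\varphi_{\theta^*}(y)|^2$ together with $A(\theta^*)\Lambda''(\theta^*)A(\theta^*)=I$, one obtains
\begin{equation*}
|\psi(n^{-1/2}A(\theta^*)v,\theta^*,n)|^2\,\phi^2(v) \,=\, (2\pi)^{-d}\,|\varphi_{\theta^*}(n^{-1/2}A(\theta^*)v)|^{2n},
\end{equation*}
so the factor $e^{|v|^2}$ in $|\psi|^2$ is cancelled exactly against the $e^{-|v|^2}$ in $\phi^2$. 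Substituting the definition (\ref{ISform}) of $g_n$,
\begin{equation*}
I_n = \frac{1}{b_n(2\pi)^{d/2}}\!\int_{|v|<a_n}\!\! e^{|v|^2/2}|\varphi_{\theta^*}(n^{-1/2}A(\theta^*)v)|^{2n}dv + \frac{1}{C_n(2\pi)^d}\!\int_{|v|\geq a_n}\!\! |v|^\alpha|\varphi_{\theta^*}(n^{-1/2}A(\theta^*)v)|^{2n}dv.
\end{equation*}

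\textbf{Outer estimate.} On $\{|v|\geq a_n\}$, the choice $a_n=\sqrt n\,\delta_2(n)$ enforces $|n^{-1/2}A(\theta^*)v|\geq\delta_3(n)$, hence $|\varphi_{\theta^*}|^{2n}\leq(1-s_n)^{n-\gamma}|\varphi_{\theta^*}|^{2\gamma}$ for $n\geq\gamma$. A change of variables $u=n^{-1/2}A(\theta^*)v$ produces a Jacobian of order $n^{d/2}$ and the $|u|^\alpha$ bound contributes an extra $n^{\alpha/2}$, reducing the outer integral to $K_1 n^{(d+\alpha)/2}(1-s_n)^{n-\gamma}\int|u|^\alpha|\varphi_{\theta^*}(u)|^{2\gamma}du$. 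The last integral is finite by Assumption~\ref{main:assumption}, after transferring the decay from $|\varphi|^\gamma$ to $|\varphi_{\theta^*}|^{2\gamma}$ using analyticity of $M$ on a complex neighbourhood of $\theta^*$. Combining with the lower bound $C_n\geq K_2(1-p_n)a_n^{\alpha-d}$ and the condition (\ref{lim999}), which was chosen precisely for this estimate, shows the outer contribution is $o(n^{-1/2})$.

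\textbf{Central estimate.} On $\{|v|<a_n\}$ one has $|y|:=|n^{-1/2}A(\theta^*)v|<\delta_1(n)\to 0$, validating the Taylor expansion of $\eta$ at $0$. Properties (\ref{property1})--(\ref{property2}) give $\eta(0,\theta^*)=\eta'(0,\theta^*)=0$ and $\eta''(0,\theta^*)=0$, and the cubic term $-(\iota/6)\Lambda'''(\theta^*)\odot y$ is purely imaginary, so $\operatorname{Re}\eta(y,\theta^*)$ reduces to the real fourth-order contribution of $\Lambda(\theta^*+\iota y)$ and is $O(|y|^4)$. Consequently
$|\varphi_{\theta^*}(n^{-1/2}A(\theta^*)v)|^{2n}=e^{-|v|^2}\exp(O(|v|^4/n))$
uniformly on the central region, and the integrand reduces to $(2\pi)^{-d/2}e^{-|v|^2/2}(1+O(|v|^4/n))=\phi(v)(1+O(|v|^4/n))$. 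Integrating and using that $\int_{|v|\geq a_n}\phi\,dv$ is super-polynomially small in $a_n$ (since $a_n\to\infty$) gives $(1+o(n^{-1/2}))/b_n=1+o(n^{-1/2})$ for the central piece, by the choice of $b_n$ satisfying (\ref{lim999}). Adding the two pieces yields $I_n=1+o(n^{-1/2})$, and combining this with Proposition~\ref{asymptotic1}'s expansion $\int\psi\phi\,dv=1+o(n^{-1/2})$ gives $\operatorname{Var}_n[\psi\phi/g_n]=I_n-(\int\psi\phi\,dv)^2=o(n^{-1/2})\to 0$, the claimed asymptotically vanishing relative error.

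\textbf{Main obstacle.} The delicate calibration is that the central Taylor remainder requires $a_n^4/n=n\delta_2(n)^4\to 0$, which imposes an implicit upper bound on $\delta_2(n)$ that must remain compatible with $\sqrt n\,\delta_2(n)\to\infty$; simultaneously the outer ratio $n^{(d+\alpha)/2}(1-s_n)^n/(1-p_n)$ must vanish fast enough via (\ref{lim999}). A secondary technical step is transferring Assumption~\ref{main:assumption} from $|\varphi|$ to $|\varphi_{\theta^*}|$, which relies on analyticity of $M$ on $\Theta^0$.
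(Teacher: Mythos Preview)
Your approach is correct and takes a genuinely different route from the paper on the central piece. The paper works with the \emph{complex} quantity $\psi^2=e^{2n\eta}$ and controls $\bigl|\psi^2-1-\beta\bigr|$ via the Feller-type Lemma~\ref{keylemma}, taking $\beta$ to be the cubic Taylor term $\tfrac{\zeta_3(\theta^*)}{3\sqrt n}\odot(\iota v)$; the resulting bound $\exp(|v|^2/4)\bigl(\tfrac{2\epsilon_n|v|^3}{\sqrt n}+\tfrac{P(v)}{n}\bigr)$ integrates against $e^{-|v|^2/2}$ to $o(n^{-1/2})$. You instead pass to $|\psi|^2=\exp\bigl(2n\operatorname{Re}\eta\bigr)$ and exploit the cleaner fact that $\operatorname{Re}\eta$ has no cubic contribution (the cubic term $-\tfrac{\iota}{6}\Lambda'''(\theta^*)\odot y$ being purely imaginary), so the first nonvanishing real correction is quartic. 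This bypasses Lemma~\ref{keylemma} entirely and is the more natural object for a variance bound; note, however, that the theorem as written concerns $\psi^2$ rather than $|\psi|^2$, so what you actually prove is the (equally useful, arguably more relevant) statement $\int|\psi|^2\phi^2/g_n\,dv=1+o(n^{-1/2})$, from which $\operatorname{Var}_n\to 0$ follows just as well. The outer estimates in the two proofs are essentially identical.

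One correction to your ``main obstacle'': you do \emph{not} need $a_n^4/n=n\delta_2(n)^4\to 0$. On the central region $|v|<a_n=\sqrt n\,\delta_2(n)$ one always has $|v|^4/n\le \delta_2(n)^2\,|v|^2$, so the quartic remainder obeys $|R_n(v)|\le C\delta_2(n)^2|v|^2<\tfrac14|v|^2$ for all large $n$, simply because $\delta_2(n)\to 0$. Hence
\[
\bigl|\exp(R_n(v))-1\bigr|\;\le\;|R_n(v)|\,e^{|R_n(v)|}\;\le\;\frac{C|v|^4}{n}\,e^{|v|^2/4},
\]
and integrating against $\phi(v)$ gives $O(1/n)=o(n^{-1/2})$ with no extra hypothesis on $\delta_2$. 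The three properties listed for $\{s_n\}$ in Section~\ref{proposedIS} already suffice; your worry that an additional upper bound on $\delta_2(n)$ must be reconciled with $\sqrt n\,\delta_2(n)\to\infty$ is unfounded.
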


We will use the following lemma from \cite{feller2}.
\begin{lemma}
  \label{keylemma}
  For any $\lambda,\,\beta\in\mathbb{C}$,
 $$|\exp(\lambda)-1-\beta|\leq \left(|\lambda-\beta|+\frac{|\beta|^2}{2}\right)\exp(\omega)\,\,\,\,\text{for all}\,\,\omega\geq\max\{|\lambda|,|\beta|\}\,.$$
\end{lemma}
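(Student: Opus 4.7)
The plan is to bound $|\exp(\lambda)-1-\beta|$ by splitting it into two terms that are individually well-suited to elementary bounds: a linear term of order $|\lambda-\beta|$ handled by a Lipschitz-type estimate for the complex exponential, and a quadratic remainder of order $|\beta|^2$ handled by the Taylor series of $\exp$ at the origin. Concretely, I would write
\begin{equation*}
\exp(\lambda)-1-\beta \;=\; \bigl(\exp(\lambda)-\exp(\beta)\bigr) \;+\; \bigl(\exp(\beta)-1-\beta\bigr)
\end{equation*}
and bound each piece separately before applying the triangle inequality.

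For the first piece, I would parametrize the straight line segment from $\beta$ to $\lambda$ by $z(t)=\beta+t(\lambda-\beta)$, $t\in[0,1]$, and use the fundamental theorem of calculus along this path to write $\exp(\lambda)-\exp(\beta)=(\lambda-\beta)\int_0^1 \exp(z(t))\,dt$. Since $|z(t)|\leq (1-t)|\beta|+t|\lambda|\leq \max\{|\lambda|,|\beta|\}\leq \omega$, this yields $|\exp(\lambda)-\exp(\beta)|\leq |\lambda-\beta|\exp(\omega)$. For the second piece, I would expand $\exp(\beta)-1-\beta=\sum_{k\geq 2}\beta^k/k!$ and factor out the leading $\beta^2/2!$ to get
\begin{equation*}
|\exp(\beta)-1-\beta| \;\leq\; \sum_{k\geq 2}\frac{|\beta|^k}{k!} \;=\; \frac{|\beta|^2}{2}\sum_{k\geq 0}\frac{2\,|\beta|^k}{(k+1)(k+2)\,k!} \;\leq\; \frac{|\beta|^2}{2}\,\exp(|\beta|)\;\leq\; \frac{|\beta|^2}{2}\exp(\omega),
\end{equation*}
where the middle inequality comes from $2/((k+1)(k+2))\leq 1$ for $k\geq 0$. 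Adding the two estimates gives exactly the asserted bound.

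There is essentially no main obstacle here — the argument is a routine two‑term decomposition — but the one subtle point is simply to verify that the contour/line used in the integral estimate stays inside the disk of radius $\omega$, which is where the hypothesis $\omega\geq \max\{|\lambda|,|\beta|\}$ is used (rather than just $\omega\geq |\lambda|$, say). A minor alternative would be to absorb the linear part differently by writing $\exp(\lambda)-1-\beta=(\exp(\lambda)-1-\lambda)+(\lambda-\beta)$ and bounding the first summand by $(|\lambda|^2/2)\exp(|\lambda|)$; but this produces $|\lambda|^2$ rather than $|\beta|^2$ in the coefficient, so the decomposition through $\exp(\beta)$ is the one that matches the statement exactly.
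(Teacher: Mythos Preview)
Your argument is correct. The decomposition $\exp(\lambda)-1-\beta=(\exp(\lambda)-\exp(\beta))+(\exp(\beta)-1-\beta)$ together with the line-integral bound and the Taylor remainder bound gives exactly the claimed inequality, and you have correctly identified that the convexity estimate $|(1-t)\beta+t\lambda|\le\max\{|\lambda|,|\beta|\}\le\omega$ is precisely where the hypothesis on $\omega$ enters.

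As for comparison: the paper does not prove Lemma~\ref{keylemma} at all; it simply quotes it from Feller (\cite{feller2}). Your self-contained proof is therefore strictly more than what the paper provides, and is the standard way one would verify this estimate. There is nothing to correct.
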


Also note that from the definitions of $\psi$ and $\eta$ it follows
that, for any $\theta\in\Theta$,
$$\exp\left\{-\frac{v\cdot v}{2}\right\}\psi(n^{-\frac{1}{2}}A(\theta)v,\theta,n)$$
is a characteristic function. To see this, observe that
\begin{eqnarray*}
  \exp\left\{-\frac{v\cdot v}{2}\right\}\psi(n^{-\frac{1}{2}}A(\theta)v,\theta,n)&=&\left[\exp\left\{-\frac{v\cdot v}{2n}+\eta\left(n^{-\frac{1}{2}}A(\theta)v,\theta\right)\right\}\right]^n\\
  &=&\left(E_{\theta}\left[e^{\iota n^{-\frac{1}{2}}A(\theta)v\cdot (X_1- x_0)}\right]\right)^n\\
  &=&\left[\varphi_{\theta}\left(n^{-\frac{1}{2}}A(\theta)v\right)\right]^n.
\end{eqnarray*}

Some more observations are useful for proving Theorem \ref{mainthm0}.

Since $\eta'''$ is continuous, it follows from the three term Taylor
series expansion,
$$\eta(v,\theta)=\eta(0 ,\theta)+\eta'(0 ,\theta)v+\frac{1}{2}(v)^T\eta''(0,\theta)v+\frac{1}{6}\eta'''(\tilde{v} ,\theta)\odot v$$
(where $\tilde{v}$ is between $v$ and the origin) and
(\ref{property1}) and (\ref{property2}) above that there exists a
sequence $\{\epsilon_n\}$ of positive numbers converging to zero so
that
$$|\eta(v,\theta^*)-\frac{1}{3!}\eta'''(0,\theta^*)\odot v|\leq \epsilon_n (\kappa_{min})^{\frac{3}{2}}|v|^3\,\,\,\,\text{for}\,\,|v|<\delta_1(n),$$
or equivalently
\begin{equation}
  \label{estimate122}
  |\eta(v,\theta^*)-\frac{1}{3!}\Lambda'''(\theta^*)\odot (\iota v)|\leq \epsilon_n(\kappa_{min})^{\frac{3}{2}}|v|^3\,\,\,\,\text{for}\,\,|v|<\delta_1(n)\,.
\end{equation}

Furthermore, for $n$ sufficiently large,
\begin{equation}
  \label{estimate222}
  \left|\frac{1}{3!}\Lambda'''(\theta^*)\odot (\iota v)\right|<\frac{1}{8}\kappa_{min}|v|^2
\end{equation}
and
\begin{equation}
  \label{estimate322}
  |\eta(v,\theta^*)|<\frac{1}{8}\kappa_{min}|v|^2
\end{equation}
for all $|v|<\delta_1(n)$. We shall assume that $n$ is sufficiently
large so that (\ref{estimate222}) and (\ref{estimate322}) hold in the
remaining analysis.

\begin{proof} ( {\bf Theorem \ref{mainthm0}})\\
  We
  write $$\int_{v\in\Re^d}\frac{\psi^2(n^{-\frac{1}{2}}A(\theta^*)v,\theta^*,n)\phi^2(v)}{g_n(v)}\,dv=I_3+I_4\,.$$
  Where
$$I_3=\int_{|v|<\sqrt{n}\delta_2(n)}\frac{\psi^2(n^{-\frac{1}{2}}A(\theta^*)v,\theta^*,n)\phi^2(v)}{g_n(v)}\,dv$$
and
$$I_4=\int_{|v|\geq\sqrt{n}\delta_2(n) }\frac{\psi^2(n^{-\frac{1}{2}}A(\theta^*)v,\theta^*,n)\phi^2(v)}{g_n(v)}\,dv.$$
From (\ref{ISform}) we get
$$I_3=\frac{1}{b_n}\int_{|v|<\sqrt{n}\delta_2(n)}\psi^2(n^{-\frac{1}{2}}A(\theta^*)v,\theta^*,n)\phi(v)\,dv$$
and
$$I_4=\frac{1}{C_{n}}\int_{|v|\geq\sqrt{n}\delta_2(n)}|v|^\alpha\psi^2(n^{-\frac{1}{2}}A(\theta^*)v,\theta^*,n)\phi^2(v)\,dv.$$
For any $c>0$, put
$$\Phi_d(c):=\int_{|v|<c}\phi(v)dv \left(=IG\left(\frac{d}{2}, \frac{c^2}{2}\right)\right)\,.$$
By triangle inequality we have
$$|I_3-1|\leq\left|I_3-\frac{\Phi_d\left(\sqrt{n}\delta_2(n)\right)}{b_n}\right|+\left|\frac{\Phi_d\left(\sqrt{n}\delta_2(n)\right)}{b_n}-1\right|.$$
Since as $n\rightarrow\infty$ we have
$\Phi_d\left(\sqrt{n}\delta_2(n)\right)\rightarrow 1$ and
$b_n\rightarrow 1$, the second term in RHS converges to zero.  Writing
$\zeta_3(\theta^*)=\Lambda'''(\theta^*)\star A(\theta^*)$, for the
first term we have
\begin{eqnarray*}
  \left|I_3-\frac{\Phi_d\left(\sqrt{n}\delta_2(n)\right)}{b_n}\right|&=&\frac{1}{b_n}\left|\int_{|v|<\sqrt{n}\delta_2(n)}\left\{\psi^2(n^{-\frac{1}{2}}A(\theta^*)v,\theta^*,n)-1\right\}\phi(v)\,dv\right|\nonumber\\
  &=&\frac{1}{b_n}\left|\int_{|v|<\sqrt{n}\delta_2(n)}\left\{\psi^2(n^{-\frac{1}{2}}A(\theta^*)v,\theta^*,n)-1-\frac{\zeta_3(\theta^*)}{3\sqrt{n}}\odot(\iota v)\right\}\phi(v)\,dv\right|\nonumber\\
  &\leq&\frac{1}{b_n}\frac{1}{(2\pi)^{\frac{d}{2}}}\int_{|v|<\sqrt{n}\delta_2(n)}\left|\psi^2(n^{-\frac{1}{2}}A(\theta^*)v,\theta^*,n)-1-\frac{\zeta_3(\theta^*)}{3\sqrt{n}}\odot(\iota v)\right|e^{-\frac{v^2}{2}}\,dv.
\end{eqnarray*}
We apply Lemma (\ref{keylemma}) with
$$\lambda=2n\times\eta\left(n^{-\frac{1}{2}}A(\theta^*)v,\theta^*\right)\,\,\,\text{and}\,\,\,\beta=n\frac{\Lambda'''(\theta^*)}{3}\odot\left(\iota n^{-\frac{1}{2}}A(\theta^*)v\right).$$
Since $\frac{|\beta|^2}{2}=\frac{1}{n}P(v)$, where $P$ is a
homogeneous polynomial whose coefficients does not dependent on $n$,
and $|v|<\sqrt{n}\delta_2(n)$ implies
$|n^{-\frac{1}{2}}A(\theta^*)v|<\delta_1(n)$, we have from
(\ref{estimate322}), (\ref{estimate222}) and (\ref{estimate122}),
respectively

$$|\lambda|=2n\left|\eta\left(n^{-\frac{1}{2}}A(\theta^*)v,\theta^*\right)\right|<2n\frac{1}{8}\kappa_{min}|n^{-\frac{1}{2}}A(\theta^*)v|^2\leq\frac{1}{8}\kappa_{min}||A(\theta^*)||^2|v|^2=\frac{|v|^2}{4},$$
$$|\beta|=2n\left|\frac{1}{3!}\Lambda'''(\theta^*)\odot\left(\iota n^{-\frac{1}{2}}A(\theta^*)v\right)\right|<2n\frac{1}{8}\kappa_{min}|n^{-\frac{1}{2}}A(\theta^*)v|^2\leq\frac{1}{8}\kappa_{min}||A(\theta^*)||^2|v|^2=\frac{|v|^2}{4}$$
and
$$|\lambda-\beta|=2n\left|\eta\left(n^{-\frac{1}{2}}A(\theta^*)v,\theta^*\right)-\frac{1}{3!}\Lambda'''(\theta^*)\odot\left(\iota n^{-\frac{1}{2}}A(\theta^*)v\right)\right|<2n\epsilon_n(\kappa_{min})^{\frac{3}{2}}|n^{-\frac{1}{2}}A(\theta^*)v|^3\leq\frac{2\epsilon_n|v|^3}{\sqrt{n}}.$$
From Lemma~\ref{keylemma}, it now follows that the integrand in the
last integral is dominated by
$$\exp\left\{\frac{|v|^2}{4}\right\}\times\left(\frac{2\epsilon_n|v|^3}{\sqrt{n}}+\frac{1}{n}P(v)\right)\exp\left\{-\frac{|v|^2}{2}\right\}\times=
\exp\left\{-\frac{|v|^2}{4}\right\}\left(\frac{2\epsilon_n|v|^3}{\sqrt{n}}+\frac{1}{n}P(v)\right).$$
Therefore we have $I_3= 1 + o(n^{-\frac{1}{2}})$.

Also
\begin{eqnarray*}
  |I_4|&\leq&\frac{1}{(2\pi)^d C_{n}}\int_{|v|>\sqrt{n}\delta_2(n)}|v|^\alpha\left|\exp\left\{-|v|^2\right\}\psi^2(n^{-\frac{1}{2}}A(\theta^*)v,\theta^*,n)\right|\,dv\\
  &=&\frac{1}{(2\pi)^d C_{n}}\int_{|v|>\sqrt{n}\delta_2(n)}|v|^\alpha\left|\varphi_{\theta^*}\left(n^{-\frac{1}{2}}A(\theta^*)v\right)\right|^{2n}\,dv\\
  &\leq&\frac{(1-s_n)^{n-\frac{\gamma}{2}}}{(2\pi)^d C_{n}}\int_{v\in\Re}|v|^\alpha\left|\varphi_{\theta^*}\left(n^{-\frac{1}{2}}A(\theta^*)v\right)\right|^\gamma\,dv\label{eqn323}\\
  &=& \frac{(1-s_n)^{n-\frac{\gamma}{2}}n^{\frac{d+\alpha}{2}}\sqrt{|\Lambda''(\theta^*)|}}{(2\pi)^d C_{n}}\int_{u\in\Re}|A(\theta^*)^{-1}u|^\alpha\left|\varphi_{\theta^*}(u)\right|^\gamma\,du\\
  &\leq& D_1\frac{(1-s_n)^{n-\frac{\gamma}{2}}n^{\frac{d+\alpha}{2}}}{C_{n}}\int_{u\in\Re}|u|^\alpha\left|\varphi_{\theta^*}(u)\right|^\gamma\,du\\
  &\leq& D_1\frac{(1-s_n)^{n-\frac{\gamma}{2}}n^{\frac{d+\alpha}{2}}\int_{|v|\geq \sqrt{n}\delta_2(n)}\frac{dv}{|v|^\alpha}}{(1-p_n)}\int_{u\in\Re}|u|^\alpha\left|\varphi_{\theta^*}(u)\right|^\gamma\,du\,.
\end{eqnarray*}
where $D_1$ is a constant independent of $n$.  By Assumption
\ref{main:assumption}, the above integral over $u$ is finite.  For
large $n$ we also have
$$\int_{|v|\geq \sqrt{n}\delta_2(n)}\frac{dv}{|v|^\alpha}\leq\int_{|v|\geq 1}\frac{dv}{|v|^\alpha}.$$
By choice of $b_n$ we can conclude that $I_4\rightarrow0$ as
$n\rightarrow\infty$, proving Theorem \ref{mainthm0}.
\end{proof}

\section{Efficient Estimation of Tail Probability}
In this section we consider the problem of efficient estimation of
$P(\bar{X}_n \in \mathcal{A})$ for sets $\mathcal{A}$ that are affine
transformations of the non-negative orthants $\Re^d_+$ along with some
minor variations.  As in (\cite{bucklew}), dominating point of the set
$\mathcal{A}$ plays a crucial role in our analysis.  As is well known,
a point $x_0$ is called a dominating point of $\mathcal{A}$ if $x_0$
uniquely satisfies the following properties (see e.g, \cite{ney},
\cite{bucklew}):
\begin{enumerate}
\item $x_0$ is in the boundary of $\mathcal{A}$.
\item There exists a unique $\theta^*\in\Re^d$ with
  $\Lambda^{\prime}(\theta^*)=x_0$.
\item $\mathcal{A}\subseteq\{x|\theta^*\cdot(x-x_0) \geq 0\}$.
\end{enumerate}

As is apparent from (\cite{ney}, \cite{sadowsky:bucklew},
\cite{bucklew}), in many cases a general set $\mathcal{A}$ may be
partitioned into finitely many sets $(\mathcal{A}_i : i \leq m)$ each
having its own dominating point.  From simulation viewpoint, one way
to estimate $P(\bar{X}_n \in \mathcal{A})$ then is to estimate each
$P(\bar{X}_n \in \mathcal{A}_i)$ separately with an appropriate
algorithm.  In the remaining paper, we assume the existence of a
dominating point $x_0$ for $\mathcal{A}$.

Our estimation relies on a saddle-point representation of $P(\bar{X}_n
\in \mathcal{A})$ obtained using Parseval's relation. Let
$$Y_n :=\sqrt{n}(\bar{X}_n - x_0)$$
and
$$\mathcal{A}_{n,x_0} := \sqrt{n}(\mathcal{A} - x_0)$$
where $x_0=(x_0^1,x_0^2,\ldots,x_0^d)$ is an arbitrarily chosen point
in $\Re^d$.  Let $h_{n,\theta,x_0}(y)$ be the density function of
$Y_n$ when each $X_i$ has distribution function $F_{\theta}$, where,
recall that
$$dF_{\theta}(x) = \exp(\theta\cdot x)M(\theta)^{-1}dF(x) = \exp\{\theta\cdot x - \Lambda(\theta)\}dF(x)\,.$$
An exact expression for the tail probability is given by:
\begin{eqnarray}
  \label{tail:generalized}
  P[\bar{X}_n\in \mathcal{A}]= P[Y_n\in\mathcal{A}_{n,x_0}] = e^{-n\{\theta\cdot x_0 - \Lambda(\theta)\}}\int_{y\in\mathcal{A}_{n,x_0}}e^{-\sqrt{n}(\theta\cdot y)}h_{n,\theta, x_0}(y)\,dy\,
\end{eqnarray}
which holds for any $\theta\in\Theta$ and any $x_0\in\Re^d$.  The
representation (\ref{tail:generalized}) is not very useful without
further restriction on $x_0$ and $\theta$ (see e.g., \cite{ney}).
Again, assuming that a solution $\theta^*\in\Theta^0$ to
$\Lambda'(\theta) = x_0$ exists, where $x_0$ is the dominating point
of $\mathcal{A}$, define
$$c(n,\theta^*,x_0) = \int_{y\in\mathcal{A}_{n,x_0}} \exp\{-\sqrt{n}(\theta^*\cdot y)\}\,dy=n^{\frac{d}{2}}\int_{w\in(\mathcal{A}-x_0) } \exp\{-n(\theta^*\cdot w)\}\,dw$$
We need the following assumption:
\begin{assumption}
  \label{cn:finite}
  $\forall n$, $c(n,\theta^*,x_0)<\infty$.
\end{assumption}

Since $x_0$ is a dominating point of $\mathcal{A}$, for any
$y\in\mathcal{A}_{n,x_0}$, we have $\theta^*\cdot y\geq 0$.  Hence, if
$\mathcal{A}$ is a set with finite Lebesgue measure then
$c(n,\theta^*,x_0)$ is finite.  Assumption \ref{cn:finite} may hold
even when $\mathcal{A}$ has infinite Lebesgue measure, as
Example~\ref{positive:orthant} below illustrates.

When Assumption \ref{cn:finite} holds, we can rewrite the right hand
side of (\ref{tail:generalized}) as
\begin{equation}
  \label{pre:parseval}
  c(n,\theta^*,x_0) e^{-n\{\theta^*\cdot x_0 - \Lambda(\theta^*)\}}\int_{y\in\mathcal{A}_{n,x_0}} r_{n,\theta^*,x_0}(y)h_{n,\theta^*,x_0}(y)\,dy\,
\end{equation}
where
\begin{equation}
  r_{n,\theta^*,x_0}(y) = \left\{
    \begin{array}{lr}
      \frac{\exp\{-\sqrt{n}(\theta^*\cdot y)\}}{c(n,\theta^*,x_0)} & \text{when}\,\, y\in\mathcal{A}_{n,x_0}\\
      0 & \text{otherwise}
    \end{array}\label{exp:like}
  \right.
\end{equation}
is a density in $\Re^d$.

Let $\rho_{n,\theta^*,x_0}(t)$ denote the complex conjugate of the
characteristic function of $r_{n,\theta^*,x_0}(y)$.  Since the
characteristic function of $h(n,\theta^*,x_0)$ equals
$$e^{-\iota t \sqrt{n} x_0}\left[\frac{M\left(\theta^* + \frac{\iota t}{\sqrt{n}}\right)}{M(\theta^*)}\right]^n,$$
by Parseval's relation, (\ref{pre:parseval}) is equal to
\begin{equation}
  \label{post:parseval}
  c(n,\theta^*,x_0) e^{-n\{\theta^*\cdot x_0 - \Lambda(\theta^*)\}}\left(\frac{1}{2\pi}\right)^d\int_{t\in\Re^d}\rho_{n,\theta^*,x_0}(t)e^{-\iota t \sqrt{n} x_0}\left[\frac{M\left(\theta^* + \frac{\iota t}{\sqrt{n}}\right)}{M(\theta^*)}\right]^n\,dt.
\end{equation}

This in turn, by the change of variable $t=A(\theta^*)v$ and
rearrangement of terms, equals
\begin{equation}
  \label{final}
  \frac{c(n,\theta^*,x_0) e^{-n\{\theta^*\cdot x_0 - \Lambda(\theta^*)\}}}{\sqrt{\operatorname{det}(\Lambda^{\prime\prime}(\theta^{*}))}}\left(\frac{1}{2\pi}\right)^{\frac{d}{2}} \int_{v\in\Re^d}\rho_{n,\theta^*,x_0}(A(\theta^*)v)\psi(n^{-\frac{1}{2}}A(\theta^*)v,\theta^*,n)\phi(v)\,dv.
\end{equation}
We need another assumption to facilitate analysis:
\begin{assumption}
  \label{limit:chfn}
  For all $t\in\Re^d$,
$$\lim_{n\rightarrow\infty}\rho_{n,\theta^*,x_0}(t) = 1.$$
\end{assumption}

\begin{prop}
  \label{asymptotic3}
  Suppose $\mathcal{A}$ has a dominating point $x_0$, the associated
  $\theta^*\in\Theta^o$ and $\Lambda''(\theta^*)$ is strictly positive
  definite.  Further, Assumptions \ref{cn:finite} and \ref{limit:chfn}
  hold.  Then,
  \begin{equation}
    \label{limit:007}
    P[\bar{X}_n\in \mathcal{A}]\sim \left(\frac{1}{2\pi}\right)^{\frac{d}{2}} \frac{c(n,\theta^*,x_0) e^{-n\{\theta^*\cdot x_0 - \Lambda(\theta^*)\}}}{\sqrt{\operatorname{det}(\Lambda^{\prime\prime}(\theta^{*}))}},
  \end{equation}
  or, equivalently by (\ref{final})
  \begin{equation}
    \label{limit:008}
    \lim_{n\rightarrow\infty} \int_{v\in\Re^d}\rho_{n,\theta^*,x_0}(A(\theta^*)v)\psi(n^{-\frac{1}{2}}A(\theta^*)v,\theta^*,n)\phi(v)\,dv= 1.
  \end{equation}
\end{prop}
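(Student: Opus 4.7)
The goal is to establish (\ref{limit:008}), which via the identity (\ref{final}) is equivalent to the tail asymptotic (\ref{limit:007}). The plan is a standard two-scale decomposition of the integral, exploiting two facts about $\rho_{n,\theta^*,x_0}$: it is bounded by $1$ in modulus (being the complex conjugate of the characteristic function of the probability density $r_{n,\theta^*,x_0}$), and by Assumption \ref{limit:chfn} it converges pointwise to $1$.

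Fix $R>0$ and split
$$\int_{\mathbb{R}^d} \rho_{n,\theta^*,x_0}(A(\theta^*)v)\,\psi(n^{-\frac{1}{2}}A(\theta^*)v,\theta^*,n)\,\phi(v)\,dv = J_R^{(n)} + K_R^{(n)},$$
the pieces being the integrals over $\{|v|\leq R\}$ and $\{|v|>R\}$, respectively. For the bulk $J_R^{(n)}$, the three-term Taylor expansion culminating in (\ref{approx_small}), together with continuity of $\Lambda'''$ near $\theta^*$, yields $\psi(n^{-1/2}A(\theta^*)v,\theta^*,n)\to 1$ uniformly on $\{|v|\leq R\}$. Combined with the pointwise limit $\rho\to 1$ and the uniform bound $|\rho\,\psi| \leq e^{R^2/8}$ on the compact ball (from (\ref{estimate322})), dominated convergence with dominator $e^{R^2/8}\phi(v)$ gives
$$\lim_{n\to\infty} J_R^{(n)} \;=\; \int_{|v|\leq R}\phi(v)\,dv.$$

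For the tail $K_R^{(n)}$, the bound $|\rho_{n,\theta^*,x_0}|\leq 1$ and the identity $|\psi(n^{-1/2}A(\theta^*)v,\theta^*,n)\,\phi(v)| = (2\pi)^{-d/2}\,|\varphi_{\theta^*}(n^{-1/2}A(\theta^*)v)|^n$ reduce matters to controlling $\int_{|v|>R}|\varphi_{\theta^*}(n^{-1/2}A(\theta^*)v)|^n\,dv$ uniformly in $n$. I would further split at $|v|=\sqrt{n}\,\delta_2(n)$: on the moderate annulus $R<|v|\leq\sqrt{n}\,\delta_2(n)$, estimate (\ref{estimate322}) upgrades the bound to the Gaussian dominator $(2\pi)^{-d/2}\,e^{-3|v|^2/8}$, whose integral over $\{|v|>R\}$ vanishes as $R\to\infty$ uniformly in $n$; on the extreme tail $|v|>\sqrt{n}\,\delta_2(n)$, the non-lattice estimate $|\varphi_{\theta^*}(u)|\leq\sqrt{1-s_n}$ for $|u|\geq\delta_3(n)$, together with integrability of some power of $|\varphi_{\theta^*}|$, gives a bound of the type $(1-s_n)^{n/2-\gamma/2}\,n^{d/2}\int|\varphi_{\theta^*}(u)|^\gamma\,du$, which is $o(1)$ by property (2) of $\{s_n\}$, mirroring the $I_4$ estimate in the proof of Theorem \ref{mainthm0}.

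Combining the two pieces, for every $R>0$,
$$\limsup_{n\to\infty}\Bigl|\,\int\rho\,\psi\,\phi\,dv-1\,\Bigr| \;\leq\; \int_{|v|>R}\phi(v)\,dv + \limsup_{n\to\infty}|K_R^{(n)}|,$$
and the right-hand side tends to $0$ as $R\to\infty$, establishing (\ref{limit:008}). The main obstacle is making the extreme-tail bound rigorous under the hypotheses listed for Proposition \ref{asymptotic3}: the integrability $\int|\varphi_{\theta^*}|^\gamma\,du<\infty$ is not stated explicitly in the statement and must be inherited from the standing assumptions underpinning Proposition \ref{asymptotic1}, so I would either invoke Assumption \ref{main:assumption} as an implicit blanket hypothesis or supply a separate ad hoc tail bound for $|\varphi_{\theta^*}|^n$ away from the origin using the non-lattice property alone.
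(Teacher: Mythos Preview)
Your proposal is correct and carries through without real gaps. The integrability of $|\varphi_{\theta^*}|^\gamma$ that you flag is indeed not restated in Proposition~\ref{asymptotic3}, but it is a standing hypothesis in the paper (needed already in Proposition~\ref{rep:density} for the density $h_{n,\theta^*,x_0}$ to exist, and inherited through (\ref{final})), so invoking it is legitimate.

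Your route differs from the paper's own sketch. The paper says to mimic the proof of Proposition~\ref{asymptotic1}: subtract the cubic term $\frac{\zeta_3(\theta^*)}{6\sqrt n}\odot(\iota v)$, apply Lemma~\ref{keylemma} on the inner region $\{|n^{-1/2}A(\theta^*)v|<\delta\}$, and handle the extra factor $\rho_{n,\theta^*,x_0}$ via the two auxiliary limits $\int\rho\,\phi\,dv\to 1$ and $\int v_iv_jv_k\,\rho\,\phi\,dv\to 0$ (the latter compensating for the fact that $\int\rho\cdot(\text{cubic})\cdot\phi\,dv$ is no longer identically zero by odd symmetry). That machinery is tuned to deliver the sharper $o(n^{-1/2})$ rate of Proposition~\ref{asymptotic1}. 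You instead split at a fixed radius $R$, use dominated convergence on the compact ball (exploiting only $|\rho|\le 1$, $\psi\to 1$ uniformly, and $\rho\to 1$ pointwise), and push the Gaussian dominator $(2\pi)^{-d/2}e^{-3|v|^2/8}$ out to $\sqrt n\,\delta_2(n)$ before invoking the non-lattice tail bound. This is more elementary and entirely adequate for the bare limit (\ref{limit:008}); it sidesteps Lemma~\ref{keylemma} and the two auxiliary $\rho$-limits, at the cost of not yielding any rate of convergence.
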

Proof of Proposition \ref{asymptotic3} is omitted.  It follows along
the line of proof of Proposition \ref{asymptotic1} and from noting
that:
$$\lim_{n\rightarrow\infty} \int_{v\in\Re^d}\rho_{n,\theta^*,x_0}(A(\theta^*)v)\phi(v)\,dv = 1,$$
$$\lim_{n\rightarrow\infty} \int_{v\in\Re^d}v_iv_jv_k \rho_{n,\theta^*,x_0}(A(\theta^*)v)\phi(v)\,dv = 0.$$

Let $g$ be any density supported on $\Re^d$.  If $V_1,V_2,\ldots,V_N$
are iid with distribution given by density $g$, then the unbiased
estimator for $P[\bar {X}_n\in \mathcal{A}]$ is given by
\begin{eqnarray}
  \label{tail:estimator}
  \hat {P}[\bar{X}_n\in \mathcal{A}] &=& \left(\frac{1}{2\pi}\right)^{\frac{d}{2}}\frac{c(n,\theta^*,x_0) e^{-n\{\theta^*\cdot x_0 - \Lambda(\theta^*)\}}}{\sqrt{\operatorname{det}(\Lambda^{\prime\prime}(\theta^{*}))}}\nonumber\\
  & & \times\frac{1}{N}\sum_{j=1}^N \frac{\rho_{n,\theta^*,x_0}(A(\theta^*)V_j)\psi(n^{-\frac{1}{2}}A(\theta^*)V_j,\theta^*,n)\phi(V_j)}{g(V_j)}.
\end{eqnarray}
Note that for above estimator to be useful, one must be able to find
closed form expression for $c(n,\theta^*,x_0)$ and
$\rho_{n,\theta^*,x_0}(t)$ or these should be cheaply computable.  In
Section \ref{examples}, we consider some examples where we explicitly
compute $c(n,\theta^*,x_0)$ and $\rho_{n,\theta^*,x_0}$ and verify
Assumptions \ref{cn:finite} and \ref{limit:chfn}.

\begin{thm}
  \label{mainthm2}
  Under Assumptions \ref{main:assumption}, \ref{cn:finite} and
  \ref{limit:chfn},
  \begin{equation*}
    E_n\left[\frac{\rho_{n,\theta^*,x_0}^2(A(\theta^*)V)\psi^2(n^{-\frac{1}{2}}A(\theta^*)V,\theta^*,n)\phi^2(V)}{g_n^2(V)}\right]=
    1 + o(n^{-\frac{1}{2}})\,\,\,\,\text{as}\,\,\,n\rightarrow\infty,
  \end{equation*}
  where $g_n$ is same as Theorem \ref{mainthm0}.  Consequently, by
  Proposition \ref{asymptotic3}, it follows that as
  $n\rightarrow\infty$
$$Var_n\left[\hat {P}[\bar{X}_n\in \mathcal{A}]\right]\rightarrow 0$$
and the proposed estimator has asymptotically vanishing relative
error.
\end{thm}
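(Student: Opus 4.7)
The plan is to follow the proof of Theorem \ref{mainthm0} step by step, exploiting two facts about the additional factor: $|\rho_{n,\theta^*,x_0}(t)|\le 1$ uniformly in $t$ and $n$ (since it is the conjugate of a characteristic function) and $\rho_{n,\theta^*,x_0}(t)\to 1$ pointwise by Assumption \ref{limit:chfn}. I split the integral at $|v|=\sqrt{n}\delta_2(n)$ into a central part $J_3$ and a tail part $J_4$, in exact parallel with the proof of Theorem \ref{mainthm0}.

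For $J_4$, the bound $|\rho_{n,\theta^*,x_0}^2|\le 1$ reduces its estimation to the one already carried out for $I_4$ in the proof of Theorem \ref{mainthm0}. The combination of Assumption \ref{main:assumption}, the non-lattice estimate $|\varphi_{\theta^*}(u)|<\sqrt{1-s_n}$ for $|u|\ge\delta_3(n)$, and the choice of $b_n$ via (\ref{lim999}) then gives $J_4=o(n^{-1/2})$.

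For the central part $J_3$, $g_n=b_n\phi$ on $|v|<a_n$, and the three term Taylor expansion of $\eta(\cdot,\theta^*)$ at the origin together with (\ref{property1})--(\ref{property2}) yields
\begin{equation*}
\psi^2(n^{-\frac{1}{2}}A(\theta^*)v,\theta^*,n)-1 = \frac{\zeta_3(\theta^*)}{3\sqrt{n}}\odot(\iota v) + R_n(v),
\end{equation*}
which splits $J_3$ as $T_1+T_2+T_3$ with
\begin{equation*}
T_1=\frac{1}{b_n}\int_{|v|<a_n}\rho_{n,\theta^*,x_0}^2(A(\theta^*)v)\,\phi(v)\,dv, \quad T_2=\frac{1}{3\sqrt{n}\,b_n}\int_{|v|<a_n}\rho_{n,\theta^*,x_0}^2(A(\theta^*)v)\,[\zeta_3(\theta^*)\odot(\iota v)]\,\phi(v)\,dv,
\end{equation*}
and $T_3$ the remainder integral. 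For $T_3$, the bound $|\rho_{n,\theta^*,x_0}^2|\le 1$ combined with Lemma \ref{keylemma} and the pointwise estimates (\ref{estimate122})--(\ref{estimate322}) reproduces the bound on the remainder from Theorem \ref{mainthm0}, yielding $|T_3|=o(n^{-\frac{1}{2}})$. For $T_1$, dominated convergence using $|\rho^2|\le 1$ and $\rho^2\to 1$, together with $\Phi_d(a_n)\to 1$ and the rate on $b_n$ implied by (\ref{lim999}), gives $T_1=1+o(1)$. For $T_2$, the central observation is that $\int [\zeta_3(\theta^*)\odot(\iota v)]\phi(v)\,dv=0$ by odd symmetry, so $T_2$ equals $(3\sqrt{n}\,b_n)^{-1}\int_{|v|<a_n}(\rho_{n,\theta^*,x_0}^2-1)[\zeta_3(\theta^*)\odot(\iota v)]\phi\,dv$, which is $o(n^{-\frac{1}{2}})$ by dominated convergence applied with the envelope $2|\zeta_3(\theta^*)\odot(\iota v)|\phi(v)$.

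The main obstacle is the analysis of $T_2$: the symmetry based cancellation that made the corresponding integral in Theorem \ref{mainthm0} identically zero is replaced by a $(\rho_{n,\theta^*,x_0}^2-1)$ factor that only vanishes in the limit, and the estimate depends on Assumption \ref{limit:chfn} and dominated convergence in precisely the manner sketched for Proposition \ref{asymptotic3}. Combining the three pieces yields $J_3=1+o(n^{-\frac{1}{2}})$ and hence the claimed $E_n[\cdot]=1+o(n^{-\frac{1}{2}})$; the asymptotically vanishing relative error of $\hat{P}[\bar{X}_n\in\mathcal{A}]$ then follows from Proposition \ref{asymptotic3}, since the first and second moments of the normalized estimator both converge to $1$.
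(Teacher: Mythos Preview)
Your argument is essentially the paper's own proof: the same split at $|v|=\sqrt n\,\delta_2(n)$, the same reduction of the tail part via $|\rho_{n,\theta^*,x_0}|\le 1$ to the $I_4$ estimate of Theorem~\ref{mainthm0}, and the same handling of the central part by combining $|\rho|^2\le 1$ with Lemma~\ref{keylemma} for the $\psi^2-1-\frac{\zeta_3}{3\sqrt n}\odot(\iota v)$ remainder and dominated convergence (Assumption~\ref{limit:chfn}) for the pieces involving $\rho^2-1$. One small slip (shared with the paper's write-up): from $T_1=1+o(1)$ you cannot conclude $J_3=1+o(n^{-1/2})$, only $J_3=1+o(1)$; this is harmless for the vanishing relative error conclusion, which needs only that the second moment tends to $1$.
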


The proof of Theorem~\ref{mainthm2} is given in the appendix.

\subsection{Examples}
\label{examples}
\begin{example}
  \label{positive:orthant}
  Let $\mathcal{A} = x_0 + {\Re_+^d}$, where
  $x_0=(x_0^1,x_0^2,\ldots,x_0^d)$ is a given point in $\Re^d$.
  Further suppose that $\forall i=1,2,\ldots,d,\,\,\theta_i^*>0$.  It
  is easy to see that existence of such a $\theta^*$ implies that
  $x_0$ is a dominating point for $\mathcal{A}$.  It also follows that
  Assumption~\ref{cn:finite} holds and
$$c(n,\theta^*,x_0)=\frac{1}{n^{\frac{d}{2}}\theta_1^*\theta_2^*\cdots\theta_d^*}.$$
It can easily be verified that
$$\rho_{n,\theta^*,x_0}(t_1,t_2,\ldots t_d)=\prod_{i=1}^{d}\left(\frac{1}{1+\frac{\iota t_i}{\sqrt{n}\theta_i^*}}\right).$$
Therefore Assumption~\ref{limit:chfn} also holds in this case.  By
Proposition \ref{asymptotic3}, we then have
$$P[\bar{X}_n-x_0\in\Re_+^d]\sim \frac{e^{n\left\{\Lambda(\theta^*)-\theta^*\cdot x_0\right\}}}{(2\pi)^{\frac{d}{2}} n^{\frac{d}{2}} \sqrt{\operatorname{det}(\Lambda^{\prime\prime}(\theta^{*}))} \theta_1^*\theta_2^*\cdots\theta_d^*}.$$
By Theorem~\ref{mainthm2},
\begin{equation}
  \label{estimator002}
  \hat {P}[\bar{X}_n-x_0\in\Re_+^d]:=\frac{e^{n\left\{\Lambda(\theta^*)-\theta^* \cdot x_0\right\}}}{(2\pi)^{\frac{d}{2}} n^{\frac{d}{2}}\sqrt{\operatorname{det}(\Lambda^{\prime\prime}(\theta^{*}))}\theta_1^*\theta_2^*\cdots\theta_{d}^*}
  \times\frac{1}{N}\sum_{j=1}^N\frac{\psi(n^{-\frac{1}{2}}A(\theta^*)V_j,\theta^*,n)\phi(V_j)}{\prod_{i=1}^{d}\left(1+\frac{\iota e_i^TA(\theta^*)V_j}{\sqrt{n}\theta_i^*}\right)g(V_j)}
\end{equation}
is an unbiased estimator for $P[\bar {X}_n-x_0\in\Re_+^d]$ and has an
asymptotically vanishing relative error.
\end{example}

\begin{figure}[htb]
  \begin{center}
    \subfigure[\label{fig1}$\mathcal{A}=x_0 + \Re_+^d$.]{
      \includegraphics[width=6cm]{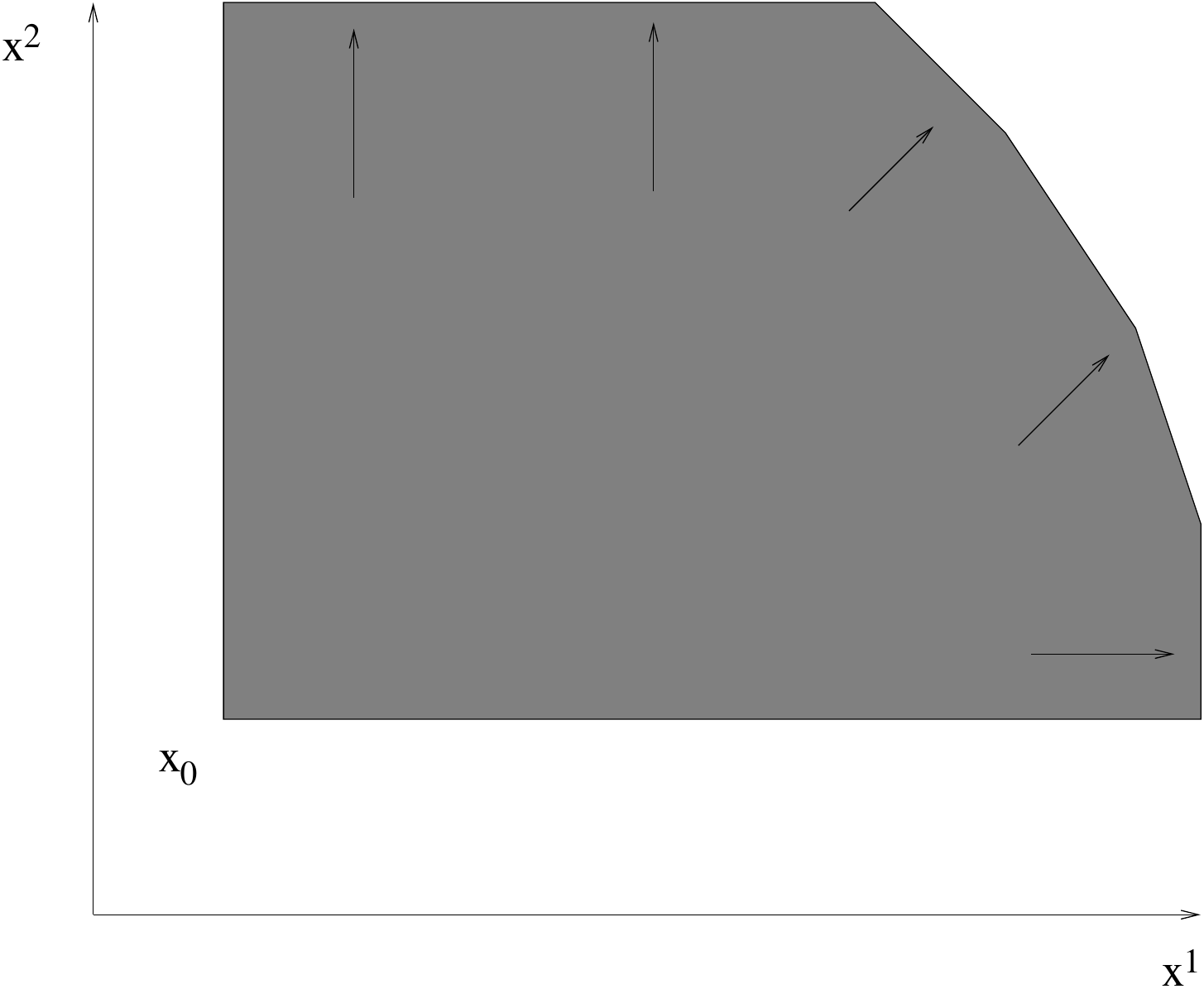}} \hspace{0.5cm}
    \subfigure[\label{fig2}$\mathcal{A}=x_0 + Q_1^+$ .]{
      \includegraphics[width=6cm]{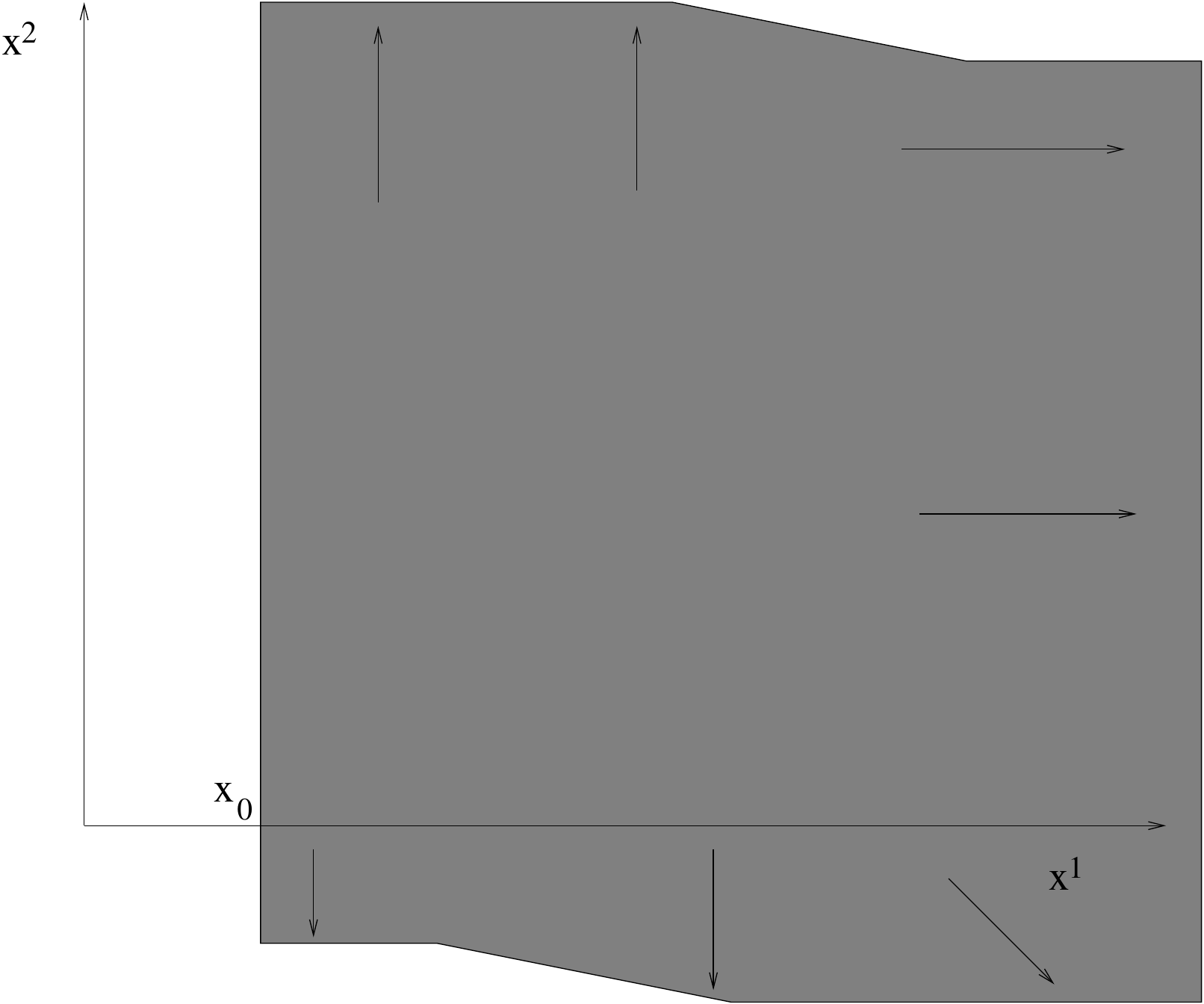}} \hspace{0.5cm}
    \subfigure[\label{fig3}$\mathcal{A}=x_0 + B{\Re_+^d}$.]{
      \includegraphics[width=6cm]{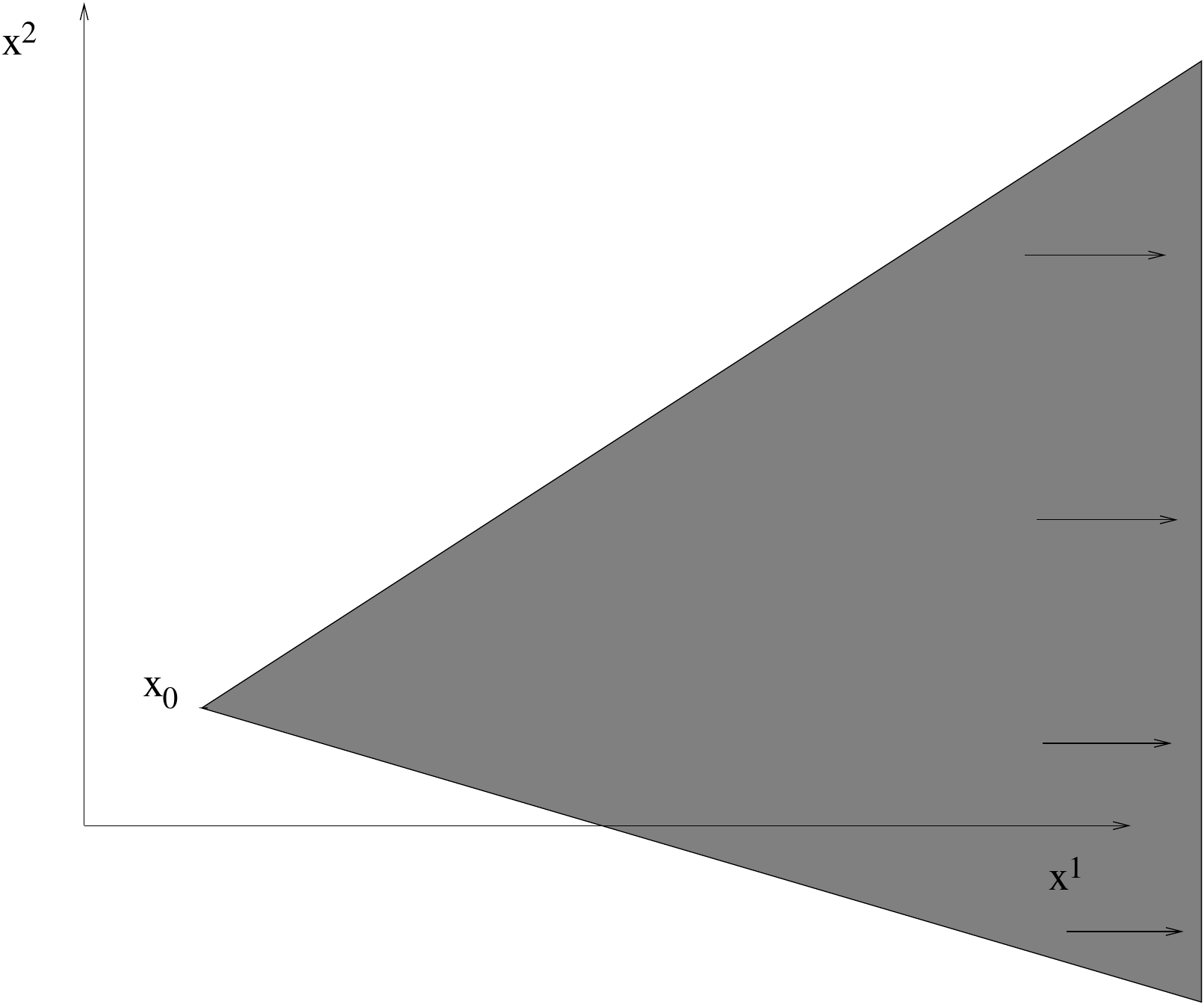}} \hspace{0.5cm}
    \subfigure[\label{fig4}$\mathcal{A}=x_0 + BQ_1^+$.]{
      \includegraphics[width=6cm]{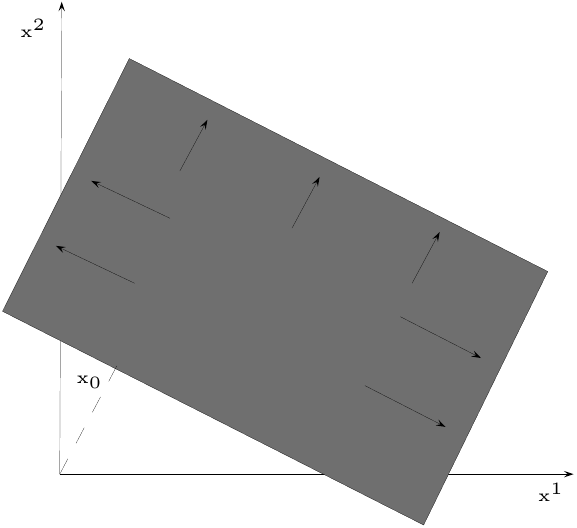}}
    \caption{\label{fig:shaded_region1} $\mathcal{A}$ is shown as
      shaded region ($d=2$).}
  \end{center}
\end{figure}

\begin{example}
  \label{positive:orthant_low_dim}
  For $0\leq d'\leq d$, let
$$Q_{d'}^+:=\{(x_1,x_2,\ldots,x_d)\in\Re^d|\,\,x_i\geq0\,\,\, \forall\,\,\, 0\leq i\leq d'\}.$$
Suppose we want to estimate $P[\bar{X}_n\in \mathcal{A}]$, where, now
$\mathcal{A}=x_0+Q_{d'}^+$ and $x_0$ is a given point in $\Re^d$ (see
Figure \ref{fig2}).  We proceed as in Example \ref{positive:orthant}.
In this case Equation (\ref{tail:generalized}) is
\begin{equation}
  \label{tail:special2}
  P[\bar{X}_n\in \mathcal{A}] = e^{-n\{\theta\cdot x_0 - \Lambda(\theta)\}}\int_{y\in Q_{d'}^+}e^{-\sqrt{n}(\theta\cdot y)}h_{n,\theta, x_0}(y)\,dy
\end{equation}
We now assume that $\theta^*_i>0, \,\,\forall i\leq d'$ and
$\theta^*_i=0\,\,\forall i>d'$

Dividing the right hand side of equation (\ref{tail:special2}) by
$\sqrt{n}\theta^*_1, \sqrt{n}\theta^*_2,\ldots, \sqrt{n}\theta^*_{d'}$
s and integrating out $y_{d'+1},y_{d'+2},\ldots,y_{d}$ we obtain
$$\frac{e^{n\left\{\Lambda(\theta^*)-\theta^* \cdot x_0\right\}}}{n^{\frac{d'}{2}}\theta^*_1\theta^*_2\cdots\theta^*_{d'}}\int_{y_i>0\forall i\leq d'}\left(\prod_{i=1}^{d'}\sqrt{n}\theta^*_i e^{-\sqrt{n}\theta^*_iy_i}\right)\left(\int_{y_i\in\mathbb{R}\forall d'<i\leq d}h_{n,\theta^*, x_0}(y)\prod_{i=d'+1}^{d}dy_i\right)\prod_{i=1}^{d'}dy_i,$$
which we can write as
$$\frac{e^{n\left\{\Lambda(\theta^*)-\theta^* \cdot x_0\right\}}}{n^{\frac{d'}{2}}\theta^*_1\theta^*_2\cdots\theta^*_{d'}}\int_{y_i>0\forall i\leq d'}\left(\prod_{i=1}^{d'}\sqrt{n}\theta^*_i e^{-\sqrt{n}\theta^*_iy_i}\right)\tilde{h}_{n,\theta^*, x_0}(y_1,y_2,\ldots,y_{d'})\prod_{i=1}^{d'}dy_i,$$
where $\tilde{h}_{n,\theta^*, x_0}(y_1,y_2,\ldots,y_{d'})$ is the
density function of $(Y^1,Y^2,\ldots,Y^{d'})$ under the measure
induced by $F_{\theta^*}$.  Thus, the problem reduces to that in
Example \ref{positive:orthant} with dimension $d'$ instead of $d$. In
this case,
$$c(n,\theta^*,x_0)=\frac{1}{n^{\frac{d'}{2}}\theta^*_1\theta^*_2\cdots\theta^*_{d'}}$$
and
$$\rho(n,\theta^*,x_0)(t_1,t_2,\ldots t_d)=\prod_{i=1}^{d'}\left(\frac{1}{1+\frac{\iota t_i}{\sqrt{n}\theta^*_i}}\right).$$
Thus, both the Assumptions \ref{cn:finite} and \ref{limit:chfn} hold
and we have
$$P[\bar{X}_n\in \mathcal{A}]\sim\frac{e^{n\left\{\Lambda(\theta^*)-\theta^* \cdot x_0\right\}}}
{(2\pi
  n)^{\frac{d'}{2}}\sqrt{\operatorname{det}(\Lambda^{\prime\prime}(\theta^{*}))}\theta^*_1\theta^*_2\cdots\theta^*_{d'}}.$$
Furthermore, the associated estimator has an asymptotically vanishing
relative error.
\end{example}

\begin{example}
  When $\mathcal{A}=x_0 + B{\Re_+^d}$ and $B$ a nonsingular matrix
  (see Figure \ref{fig3}), the problem can also be reduced to that
  considered in Example \ref{positive:orthant} by a simple change of
  variable.  Set $y=B^{-1}z$.  Then, it follows that for any $\theta$
$$c(n,\theta,x_0) = \text{det}(B)\int_{z\in\Re_+^d} \exp\{-\sqrt{n}(B^T\theta\cdot z)\}\,dz.$$
Now if we assume that all the $d$ components of $B^T\theta^*$ are
positive, then as in Example \ref{positive:orthant}, both the
Assumptions \ref{cn:finite} and \ref{limit:chfn} hold.

Similar analysis holds when $\mathcal{A}=x_0 + B{Q_{d'}^+}$, $(1 \leq
d' < d)$, and $B$ a nonsingular matrix. Then, simple change of
variable $y=B^{-1}z$ reduces the problem to that in
Example~\ref{positive:orthant_low_dim}.

\end{example}

\begin{figure}[ht]
  \begin{center}
    \subfigure[\label{subfig4}]{
      \includegraphics[width=6cm]{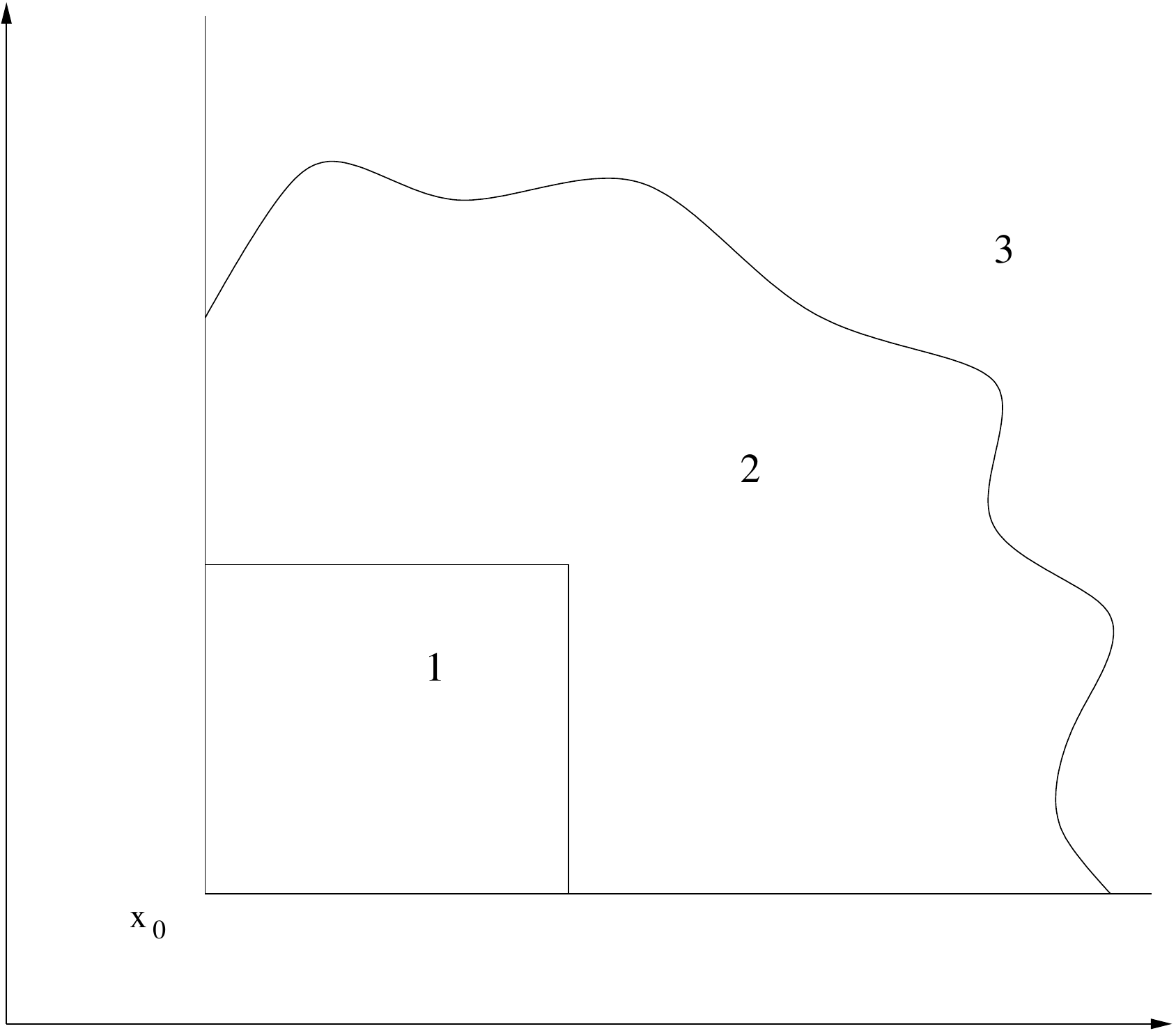}} \hspace{0.5cm}
    \subfigure[\label{subfig5}]{
      \includegraphics[width=6cm]{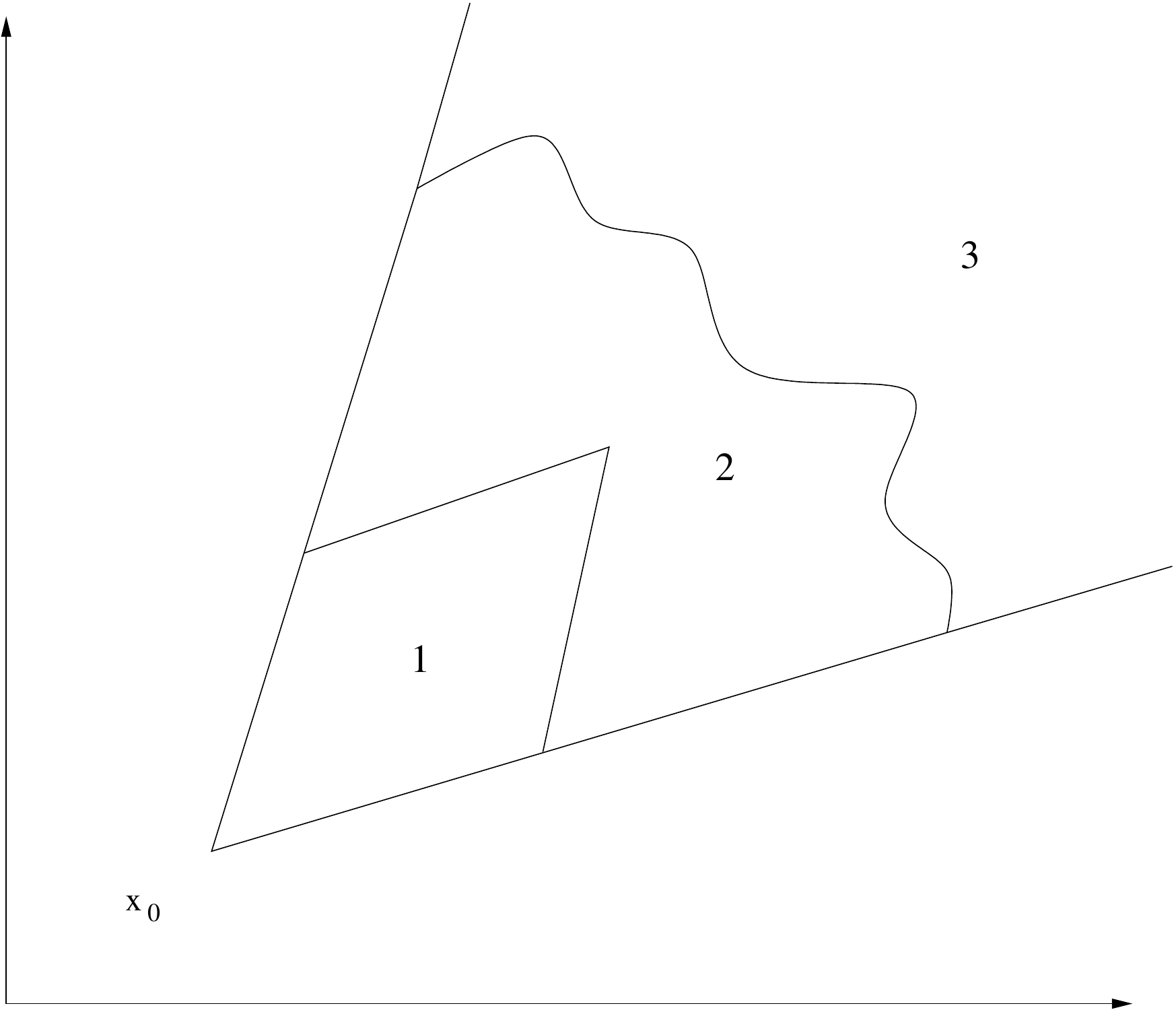}}
    \caption{\label{fig:shaded_region2}Set $\mathcal{A}^{(i)}$ is the
      region labeled $i$ ($i=1,2,3$,
      $\mathcal{A}^{(1)}\subset\mathcal{A}^{(2)}\subset\mathcal{A}^{(3)}$.)}
  \end{center}
\end{figure}

\begin{example}
  In above examples we have considered sets $\mathcal{A}$ which are
  unbounded.  In this example we show that similar analysis holds when
  the set $\mathcal{A}$ is bounded.  Consider the three increasing
  regions $(\mathcal{A}_i: i=1,2,3)$ as depicted in Figure~\ref{subfig4}.
  Here $\mathcal{A}_3$ corresponds to region $\mathcal{A}$ considered
  in Example~1.  $x_0$ is the common dominating point for all the
  three sets.  Again suppose that $\forall
  i=1,2,\ldots,d,\,\,\theta_i^*>0$.  Suppressing dependence on $x_0$
  and $\theta^*$, for $i=1,2$, let
$$c_n^{(i)}:=\int_{y\in\sqrt{n}(\mathcal{A}^{(i)}-x_0)} \exp\{-\sqrt{n}(\theta^*\cdot y)\}\,dy$$
and
$$\rho_n^{(i)}(t):=\frac{1}{c_n^{(i)}}\int_{y\in\sqrt{n}(\mathcal{A}^{(i)}-x_0)} \exp\{-\iota t\cdot y-\sqrt{n}(\theta^*\cdot y)\}\,dy.$$

If $\mathcal{A}^{(1)}$ is the $d$-dimensional rectangle given by
$\prod_{i}^d [x_0^i,x_0^i+D_i]$ then
$$c_n^{(1)}=\frac{(1-e^{-n\theta_1^*D_1})(1-e^{-n\theta_2^*D_2})\cdots(1-e^{-n\theta_d^*D_d})}{n^{\frac{d}{2}}\theta_1^*\theta_2^*\cdots\theta_d^*}$$
and
$$\rho_n^{(1)}(t_1,t_2,\ldots t_d)=\prod_{i=1}^{d}\left(\frac{1}{1+\frac{\iota t_i}{\sqrt{n}\theta_i^*}}\times\frac{1-e^{-n\theta_i^*D_i(1+\frac{\iota t_i}{\sqrt{n}\theta_i^*})}}{1-e^{-n\theta_i^*D_i}}\right).$$
Therefore, it follows that Assumption \ref{limit:chfn} holds for
$\mathcal{A}^{(1)}$.  Also note that,
\begin{eqnarray*}
  |\rho_n^{(2)}(t)-1| &\leq& \frac{1}{c_n^{(2)}}\int_{y\in\sqrt{n}(\mathcal{A}^{(2)}-x_0)} \exp\{-\sqrt{n}(\theta^*\cdot y)\}\left|e^{-\iota t\cdot y}-1\right|\,dy\\
  &\leq& \frac{1}{n^{\frac{d}{2}}c_n^{(1)}}\int_{z\in n(\mathcal{A}^{(2)}-x_0)} \exp\{-\theta^*\cdot z\}\left|e^{-\frac{\iota t\cdot z}{\sqrt{n}}}-1\right|\,dz\\
  &\leq& \frac{1}{n^{\frac{d}{2}}c_n^{(1)}}\int_{z\in \Re_+^d} \exp\{-\theta^*\cdot z\}\left|e^{-\frac{\iota t\cdot z}{\sqrt{n}}}-1\right|\,dz.\\
\end{eqnarray*}
Since the last integral converges to zero, it follows that Assumption
\ref{limit:chfn} holds for $\mathcal{A}^{(2)}$.  Similar analysis
carries over to sets as illustrated by Figure \ref{subfig5} under the
conditions as in Example 3.
\end{example}

In Example~1 we assumed that $\forall
i=1,2,\ldots,d,\,\,\theta_i^*>0$.  In many setting, this may not be
true but the problem can be easily transformed to be amenable to the
proposed algorithms. We illustrate this through the following
example. Essentially, in many cases where such a $\theta^*$ does not
exist, the problem can be transformed to a finite collection of
subproblems, each of which may then be solved using the proposed
methods.

\begin{example}
  Let $(X_i: i \geq 1)$ be a sequence of independent rv's with
  distribution same as $X=(Z_1,Z_2)$, where $Z_1$ and $Z_2$ are
  standard normal rvs with correlation $\rho$.  Suppose $\mathcal{A}
  := (a,b) + \Re^2_+$, that is $\mathcal{A} := \{(z_1,z_2)|z_1\geq
  a\,\, \text{and}\,\, z_2\geq b\}$.  Solving
  $\Lambda'(\theta_1,\theta_2) = (a,b)$ we get
$$\theta_1^*=\frac{a-\rho b}{1-\rho^2}\,\,\,\, \text{and}\,\,\,\, \theta_2^*=\frac{b-\rho a}{1-\rho^2}$$
Thus, if $\min\{\frac{a}{b},\frac{b}{a}\}>\rho$ we have both
$\theta_1^*$ and $\theta_2^*$ positive, and we are in situation of
Example \ref{positive:orthant}.  Suppose $\frac{b}{a}<\rho$ so that
$\theta_2^*<0$.  Then making the change of variable $Z_3=-Z_2$ we have
$$P[\bar{Z_1}\geq a, \bar{Z_2} \geq b]=P[\bar{Z_1}\geq a] - P[\bar{Z_1}\geq a, \bar{Z_3} \geq -b].$$
Now for estimating the second probability we have both $\theta_1^*$
and $\theta_2^*$ positive.  Similarly, the first probability is easily
estimated using the proposed algorithm.

However, note that if $(a,b)$ lies on $\{(z_1,z_2)|z_1=\rho z_2\,\,
\text{or}\,\,z_2=\rho z_1\}$ we have one of $\theta_1^*$ or
$\theta_2^*$ zero, and consequently $c(n,\theta_1^*,\theta_2^*,a,b)$
is infinite. The proposed algorithms may need to be modified to handle
such situations, however its not clear if simple adjustment to our
algorithm will result in the asymptotically vanishing relative error
property. We further discuss restrictions to our approach in Section
6.
\end{example}

\subsection{Estimating expected overshoot}

The methodology developed previously to estimate the tail probability
$P(\bar{X}_{n}\in \mathcal{A})$ can be extended to estimate $E\lbrack
\bar{X}_{n}^{\alpha} \,\vert \bar{X}_{n} \in \mathcal{A} \rbrack $ for
$\alpha \in (\mathbb{Z}_{+} - \{0\})^d$. We illustrate this in a single
dimension setting ($d=1$) for $\alpha=1$, and $\mathcal{A}= (x_0,
\infty)$ for $x_0 > EX_{i}$.

Let $S_n =\sum_{i=1}^n X_i$. In finance and in insurance one is often
interested in estimating $E \lbrack (S_{n} - nx_0) \vert S_{n} > nx_0
\rbrack$, which is known as the expected overshoot or the peak over
threshold. As we have an efficient estimator for $P(\bar{X}_{n} >
x_0)$, the problem of efficiently estimating $E \lbrack S_{n} \vert
S_{n} > nx_0 \rbrack$ is equivalent to that of efficiently estimating
$E \lbrack (S_{n} - nx_0)I(S_{n} > nx_0)\rbrack$.  Note that
\[E\lbrack ((S_{n} - nx_0) I(S_{n} > nx_0) \rbrack = \sqrt{n} E\lbrack
Y_{n}I(Y_{n}> 0) \rbrack , \] where $Y_{n} = \sqrt{n}(\bar{X}_{n} -
x_0 )$. Using (\ref{tail:generalized}) we get
\begin{equation}
  \label{eqn:overshoot1}
  E\lbrack Y_{n}I(Y_{n}> 0) \rbrack = e^{-n\{\theta^{*}\cdot x_0 -
    \Lambda(\theta^{*})\}}\int_{0}^{\infty} y\,e^{-\sqrt{n}(\theta^{*}\cdot
    y)}h_{n,\theta^{*}, x_0}(y)\,dy ,
\end{equation}
where recall that $\theta^*\in\Theta$ is a solution to
$\Lambda'(\theta) = x_0$ and $h_{n,\theta^{*}, x_0}(y)$ is the density
of $Y_{n}$ when each $X_{i}$ has distribution $F_{\theta^*}$. Define
$$\tilde{c}(n,\theta^*) = \int^{\infty}_{0} y\, \exp\{-\sqrt{n}(\theta^*\cdot y)\}\,dy= (n\, {\theta^{*}}^{2})^{-1}$$
Hence, $\forall n$, $\tilde{c}(n,\theta^*)<\infty$. The right hand
side of \eqref{eqn:overshoot1} may be re-expressed as
\begin{equation}
  \label{eqn:parseval}
  \tilde{c}(n,\theta^*) e^{-n\{\theta^*\cdot x_0
    - \Lambda(\theta^*)\}}\int^{\infty}_{0} \tilde{r}_{n,\theta^*}(y)h_{n,\theta^*,x_0}(y)\,dy\,
\end{equation}
where,
\begin{equation}
  \tilde{r}_{n,\theta^*}(y) = \left\{
    \begin{array}{lr}
      \frac{y\,\exp\{-\sqrt{n}(\theta^*\cdot y)\}}{\tilde{c}(n,\theta^*)} & \text{when}\,\, y > 0 \\
      0 & \text{otherwise}
    \end{array}
  \right.
\end{equation}
is a density in $\Re_+$.

Let $\tilde{\rho}_{n,\theta^*}(t)$ denote the complex conjugate of the
characteristic function of $\tilde{r}_{n,\theta^*}(y)$. By simple
calculations, it follows that
\[ \tilde{\rho}_{n,\theta^*}(t) =
\frac{1}{1-\frac{t^{2}}{n{\theta^{*}}^{2}}-\frac{2\iota\,t}{\sqrt{n}\theta^{*}}}
\,\, , \] and
$\underset{n\rightarrow\infty}{\lim}\tilde{\rho}_{n,\theta^*}(t) = 1.$
Then, repeating the analysis for the tail probability, analogously to
(\ref{final}), we see that \eqref{eqn:parseval} equals
\[\frac{\tilde{c}(n,\theta^*) e^{-n\{\theta^*\cdot x_0 -
    \Lambda(\theta^*)\}}}{\sqrt{2\pi\,\Lambda^{\prime\prime}(\theta^{*})}}\int^{\infty}_{0}\tilde{\rho}_{n,\theta^*}
(A(\theta^*)v)\psi(n^{-\frac{1}{2}}A(\theta^*)v,\theta^*,n)\phi(v)\,dv.\]

As in Proposition~\ref{asymptotic3}, we can see that
\[
E\lbrack (S_{n} - nx_0 ) I(S_{n} > nx_0 ) \rbrack \sim
\left(\frac{n}{2\pi}\right)^{\frac{1}{2}} \frac{\tilde{c}(n,\theta^*)
  e^{-n\{\theta^*\cdot x_0 -
    \Lambda(\theta^*)\}}}{\sqrt{\operatorname{det}(\Lambda^{\prime\prime}(\theta^{*}))}}
= \left(\frac{1}{2\pi n}\right)^{\frac{1}{2}} \frac{
  e^{-n\{\theta^*\cdot x_0 - \Lambda(\theta^*)\}}}{
  {\theta^*}^2\sqrt{\operatorname{det}(\Lambda^{\prime\prime}(\theta^{*}))}},
\]
so that
\[
\frac{E\lbrack (S_{n} - nx_0 ) I(S_{n} > nx_0)\rbrack}{P\lbrack S_{n}
  > nx_0 \rbrack} \sim \frac{1}{\theta^{*}}.
\]

Using analysis identical to that in Theorem~\ref{mainthm2}, it follows
that the resulting unbiased estimator of $E\lbrack (S_{n} - nx_0 )
I(S_{n} > nx_0) \rbrack$ (when density $g_n$ is used) has an
asymptotically vanishing relative error.

The above analysis can be easily extended to prove similar results for
the case of $X_{i} \in \Re^{d}$ and $\alpha$ a vector of positive integers.

\section{Numerical Experiments}
\subsection{Choice of parameters of IS density}
To implement the proposed method, the user must first specify the
parameters of the IS density $g_n$ appropriately.  In this subsection
we indicate how this may be done in practice.  All the user needs is
to identify a sequence $\{s_n\}_{n=1}^\infty$ satisfying the three
properties listed in Subsection \ref{proposedIS}.  Once
$\{s_n\}_{n=1}^\infty$ is specified, arriving at appropriate $\alpha$,
$a_n$, and $b_n$ is straightforward (see discussion before Theorem
\ref{mainthm0}; Finding $A(\theta^*)$, $\kappa_{max}$ and
$\kappa_{min}$ are one time computations and can be efficiently done
using MATLAB or MATHEMATICA).

Clearly for any $\epsilon\in(0,1)$, $s_n:=\frac{1}{n^\epsilon}$
satisfies properties 1 and 2.  To see that property 3 also holds, note
that
$$1-|\varphi_{\theta^*}(t)|^2=\int_{x\in\Re^d}(1-\cos(t\cdot x))d\tilde{F}_{\theta^*}(x),$$
where $\tilde{F}_{\theta^*}(x)$ is the symmetrization of
$F_{\theta^*}(x)$ (if $G$ is the distribution function of random
vector $Y$ then symmetrization of $G$, denoted $\tilde{G}$, is the
distribution function of the random vector $Y+Z$, where $Z$ has same
distribution as $-Y$).  Since
$$\frac{(t\cdot x)^2}{2!}-\frac{(t\cdot x)^4}{4!}\leq 1-\cos(t\cdot x)\leq\frac{(t\cdot x)^2}{2!},$$
it follows that there exist a neighborhood $U \subset \Re^d$ of origin
and positive constants $c$ and $C$, such that
$$c|t|^2\leq  1-|\varphi_{\theta^*}(t)|^2\leq C|t|^2$$
for all $t\in U$.  This in turn implies that there is a neighborhood
$V\subset \Re$ of zero and positive constants $c,C,c_1$ and $C_1$ such
that
$$cx^2\leq h(x)\leq Cx^2$$
and
$$c_1\sqrt{x}\leq h_1(x)\leq C_1\sqrt{x}$$
for all $x\in V$.  Therefore
$\sqrt{n}h_1(s_n)=\sqrt{n}h_1(n^{-\epsilon})\geq c
n^{\frac{1}{2}-\frac{\epsilon}{2}}\rightarrow \infty$ for any
$\epsilon<1$.

One may choose $\epsilon$ close to $1$ so that $\sqrt{n}h_1(s_n)$
grows slowly.  Then, since
$a_n=\sqrt{n}\delta_2(n)=\sqrt{\kappa_{max}}\sqrt{n}h_1(s_n)$, $a_n$
can be taken approximately a constant over a specified range of
variation of $n$.  Also since $p_n=b_n\times
IG\left(\frac{d}{2},\frac{a_n^2}{2}\right)$ is what one uses for
simulating from $g_n$, and $p_n\uparrow 1$, in practice for reasonable
values of $n$, one may take $p_n$ as a constant close to 1.  In our
numerical experiment below, parameters for $g_n$ are chosen using
these simple guidelines.

\subsection{Estimation of probability density function of
  $\bar{X}_{n}$}
We first use the proposed method to estimate the probability density
function of $\bar{X}_{n}$ for the case where sequence of random
variables $(X_{i}: i \geq 1)$ are independent and identically
exponentially distributed with mean 1. Then the sum has a known gamma
density function facilitating comparison of the estimated value to the true value.
 The
density function estimates using the proposed  method (referred to as  SP-IS
method)
are evaluated for $n =
30$, $a_{n} = 2, \alpha = 2$ and $p_{n} = 0.9$ (the algorithm
performance was observed to be relatively insensitive to small
perturbations in these values) based on $N$ generated samples.
Table~\ref{table:tabdense} shows the comparison of our method with the
conditional Monte Carlo (CMC) method proposed in Asmussen and Glynn
(2008) (pg. 145-146) for estimating the density function of
$\bar{X}_{n}$ at a few values.  As discussed in Asmussen and Glynn
(2008), the CMC estimates are given by an average of $N$ independent
samples of $nf(x-S_{n-1})$, where $S_{n-1}$ is generated by sampling
$(X_1, \ldots, X_{n-1})$ using their original density function $f$.
Figure~\ref{figure:denseIS} shows this comparison graphically over
a wider range of density function values. As may be expected, the
proposed method provides an estimator with much smaller variance
compared to the CMC method.

\begin{table}[htb]
  \centering
  \begin{tabular}{|c|c|c|c|c|c|}
    \hline
    $x$ & True value & SP-IS & Sample   & CMC      & Sample \\
    &            & estimate  & variance & estimate & variance \\
    \hline
    1.0 & 2.179 & 2.185 & 0.431 & 2.360 & 31.387 \\
    1.5 & 0.085 & 0.087 & 4.946 $\times 10^{-4}$ & 0.067 & 0.478 \\
    2.0 & 1.094 $\times10^{-4}$ & $1.105 \times 10^{-4}$ & $1.066 \times 10^{-9}$ & $7.342 \times 10^{-7}$ & $3.341 \times 10^{-1}$\\
    \hline
  \end{tabular}
  \caption{True density function and its estimates  using the proposed (SP-IS) method and the conditional Monte Carlo (CMC) for an average of 30 independent exponentially distributed mean = 1 random variables. For $x = 1.0$ and $1.5$, the number of generated samples $N = 1000$ in both the methods, and for $x=2.0$, $N=10,000$.}
  \label{table:tabdense}
\end{table}

\begin{figure}[htb]
\vspace{-10pt}
\begin{center}
\includegraphics[width=1.0\textwidth]{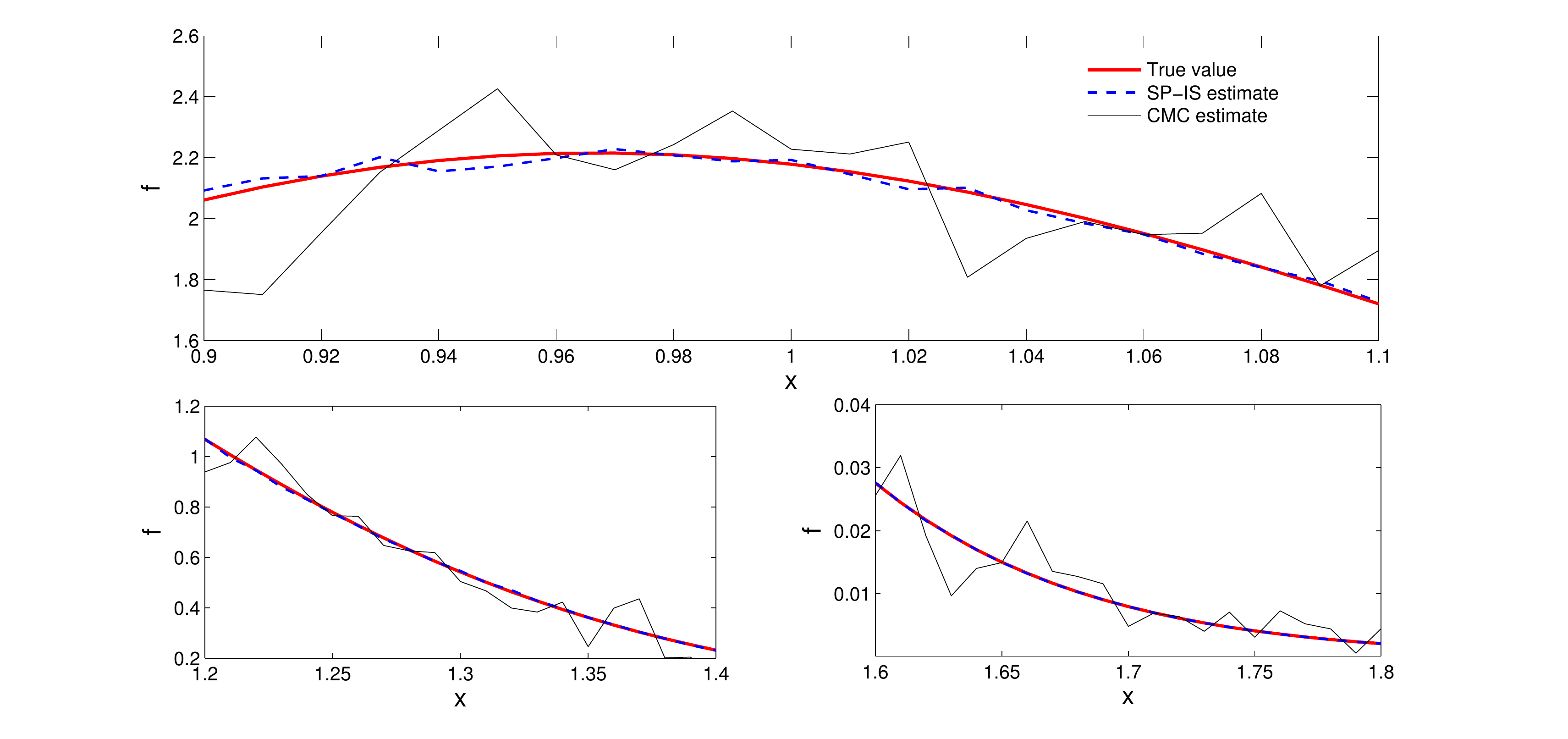}
\end{center}
\caption{True density function and its estimates  using the proposed (SP-IS) method and the conditional Monte Carlo (CMC) for an average of 30 independent exponentially distributed mean = 1 random variables. This plot illustrates the performance of the two methods over wide range of $x$ values. In both simulations $N=1,000$ at each point. 
 \label{figure:denseIS}}
\vspace{-20pt}
\end{figure}

\subsection{Comparison with independent exponential twisting approach}

We consider a simple numerical experiment in dimension $d=3$ to
compare efficiency of the proposed method with the one involving state
independent exponential twisting proposed by Sadowsky and Bucklew
(1990).  We consider a sequence of random vectors $(X_i, Y_i , Z_i: i
\geq 1)$ that are independent and identically distributed as follows:
Let $E_1,E_2,E_3$ be iid exponentially distributed with mean 1.
Define rvs $X$, $Y$ and $Z$ as
$$X=\frac{1}{2}\left(E_1+E_2\right)$$
$$Y=\frac{1}{2}\left(E_2+E_3\right)$$
$$Z=\frac{1}{2}\left(E_3+E_1\right)$$

Each $(X_i,Y_i,Z_i)$ for $i=1,2,\ldots,n$ has the same distribution as
$(X,Y,Z)$.  We estimate the probability $P(\bar{X}_n \geq x,\bar{Y}_n
\geq y,\bar{Z}_n \geq z)$ for $x=1.4$, $y=1.5$ and $z=1.4$ and
different values of $n$.  Table \ref{table:taboet} below reports the
estimates based on $N$ generated samples.  $c_n$ denotes the exact
asymptotic (the saddle point estimate) corresponding to the
probability. These differ substantially from the estimated probability
values, emphasizing the inaccuracy of $c_n$ even for reasonably large
values of $n$, and thus motivating simulation as a tool for accurate
estimation of the associated rare probabilities.

In these experiments we set $a_n=2$, $\alpha=3$ and $p_n=0.95$.  We
also report the variance reduction achieved by the proposed method
over the one proposed by Sadowsky and Bucklew (1990).  This is
substantial and it increases with increasing $n$.

\begin{table}[htb]
  \begin{center}
    \centering
    \caption{Comparison of the proposed methodology (SP-IS) with
      optimal state independent exponential twisting (OET).  In second
      and third columns we report the $95\%$ confidence intervals for
      the tail probability under SP-IS and OET respectively. }
    \label{table:taboet}
    \begin{tabular}{|r|l|l||l|}
      \hline
      n=10   &  &  &$c_n=0.0122562$\\
      \hline
      N & OET & SP-IS & Variance reduction\\
      \hline
      1000 & $(2.391 \pm 0.494)\times10^{-3}$ & $(2.492 \pm 0.211)\times10^{-3}$ & $5.48$ \\
      \hline
      10000 & $(2.546 \pm 0.163)\times10^{-3}$ & $(2.478 \pm 0.073)\times10^{-3}$ & $4.98$ \\
      \hline
      100000 & $(2.503 \pm 0.05)\times10^{-3}$ & $(2.479 \pm 0.024)\times10^{-3}$ & $4.34$ \\
      \hline
      n=20 & & &$c_n=4.490\times10^{-4}$\\
      \hline
      N & OET & SP-IS & Variance reduction\\
      \hline
      1000 & $(1.621 \pm 0.373)\times10^{-4}$ & $(1.383 \pm 0.102)\times10^{-4}$ &  $13.37$\\
      \hline
      10000 & $(1.507 \pm 0.118)\times10^{-4}$ & $(1.513 \pm 0.029)\times10^{-4}$ &  $16.55$\\
      \hline
      100000 & $(1.506 \pm 0.037)\times10^{-4}$ & $(1.474 \pm 0.009)\times10^{-4}$ &  $16.90$ \\
      \hline
      n=40 & & &$c_n=1.704\times10^{-6}$\\
      \hline
      N & OET & SP-IS & Variance reduction\\
      \hline
      1000 & $(7.349 \pm 2.346)\times10^{-7}$ & $(8.309 \pm 0.364)\times10^{-7}$ & $41.53$ \\
      \hline
      10000 & $(7.77 \pm 0.757)\times10^{-7}$ & $(8.186 \pm 0.115)\times10^{-7}$ & $43.33$ \\
      \hline
      100000 & $(8.039 \pm 0.255)\times10^{-7}$ & $(8.181 \pm 0.037)\times10^{-7}$ & $47.50$ \\
      \hline
      n=60 & & &$c_n=9.960\times10^{-9}$\\
      \hline
      N & OET & SP-IS & Variance reduction\\
      \hline
      1000 & $(5.411 \pm 2.051)\times10^{-9}$ & $(5.869 \pm 0.257)\times10^{-9}$ &  $63.69$\\
      \hline
      10000 & $(5.734 \pm 0.668)\times10^{-9}$ & $(5.632 \pm 0.071)\times10^{-9}$ &  $88.52$\\
      \hline
      100000 & $(5.666 \pm 0.214)\times10^{-9}$ & $(5.651 \pm 0.023)\times10^{-9}$ &  $86.57$\\
      \hline
      n=80 & & &$c_n=6.946\times10^{-11}$\\
      \hline
      N & OET & SP-IS & Variance reduction\\
      \hline
      1000 & $(4.101 \pm 1.664)\times10^{-11}$ & $(4.337 \pm 0.181)\times10^{-11}$ & $84.52$ \\
      \hline
      10000 & $(4.615 \pm 0.622)\times10^{-11}$ & $(4.401 \pm 0.059)\times10^{-11}$ & $111.14$ \\
      \hline
      100000 & $(4.343 \pm 0.187)\times10^{-11}$ & $(4.381 \pm 0.018)\times10^{-11}$ & $107.93$ \\
      \hline
      n=100 & & &$c_n=5.336\times10^{-13}$\\
      \hline
      N & OET & SP-IS & Variance reduction\\
      \hline
      1000 & $(3.676 \pm 1.478)\times10^{-13}$  & $(3.618 \pm 0.146)\times10^{-13}$ & $102.48$ \\
      \hline
      10000 & $(3.923 \pm 0.533)\times10^{-13}$ & $(3.637 \pm 0.049)\times10^{-13}$ & $118.32$ \\
      \hline
      100000 & $(3.546 \pm 0.172)\times10^{-13}$ & $(3.609 \pm 0.016)\times10^{-13}$ & $115.56$ \\
      \hline
    \end{tabular}
  \end{center}
\end{table}

\subsection{Comparison with state dependent exponential twisting}

We compare the efficiency of SP-IS  method  for estimating the tail probability $P(\bar{X}_{n} \in
\mathcal{A})$ with the optimal state dependent exponential twisting
method proposed by \cite{blan:gly} (referred to as BGL
method).  
They restrict their analysis to convex sets $\mathcal{A}$ with twice
continuously differentiable boundary whereas SP-IS method is
applicable to sets that are affine transformations of the non-negative
orthants $\Re^d_+$. The two methods agree in the single dimension and
hence we compare them on a single dimension example.

For  a sequence of random variables $(X_{i}: i \geq 1)$ that
are independent and identically exponentially distributed with mean
1, $P(\bar{X}_{n} \geq 1.5)$ is estimated  for different
values of $n$. Table~\ref{table:tab1d} reports the estimates based on
different $N$ generated samples. In this experiment, $a_{n} = 2,
\alpha = 2$ and $p_{n} = 0.9$ for SP-IS method. BGL method is
implemented as per \cite{blan:gly} as follows: first $X_{1}$ is generated using an
exponentially twisted distribution with mean $x_{0} = 1.5$. At each
next step, the exponential twisting coefficient in the distribution
used to generate $X_{k+1}$ is recomputed such that mean of the
distribution is $\frac{nx_{0} - \sum_{i=1}^{k}X_i}{n-k}$. The
exponential twisting is dynamically updated until the generated
$\sum_{i=1}^{k}X_i \geq nx_{0}$ at which point we stop the importance
sampling and sample rest of $n-k$ values with the original
distribution. In the other case, if distance to the boundary $nx_{0} -
\sum_{i=1}^{k}X_i$ is sufficiently large relative to remaining time
horizon $n - k$ $\big(\frac{nx_{0} - \sum_{i=1}^{k}X_i}{n - k} \geq
2\,x_{0} \big)$, then we generate the next $n-k$ samples with
exponentially twisted distribution with mean $\frac{nx_{0} -
  \sum_{i=1}^{k}X_i}{n - k}$.

\begin{table}[htb]
  \centering
  \begin{tabular}{|p{0.5cm}|p{0.5cm}|p{2.5cm}|p{2.0cm}|p{0.6cm}|p{2.0cm}|p{0.6cm}|c|p{0.6cm}|p{0.4cm}|}
    \hline
    n  & N      & True value             & BGL                   & CoV  & SP-IS                & CoV  & VR    & \multicolumn{2}{|c|}{CT}\\ \cline{9-10}
    &        &(exact asymptotic $c_n$)&                       &      &                      &      &       & BGL  & SP-IS\\ \hline
    \hline
    & $10^3$ &                        & 9.276$\times 10^{-4}$  & 1.41 & 9.055$\times 10^{-4}$ & 0.32 & 20.38 &      &\\
 50 & $10^4$ & 9.039$\times 10^{-4}$   & 9.127$\times 10^{-4}$  & 1.41 & 9.036$\times 10^{-4}$ & 0.32 & 19.77 & 7.5  & 0.9\\
    & $10^5$ &(9.992$\times 10^{-4}$)  & 9.036$\times 10^{-4}$  & 1.41 & 9.038$\times 10^{-4}$ & 0.32 & 19.13 &      &\\ \hline

    & $10^3$ &                        & 5.936$\times 10^{-6}$  & 1.44 & 5.932$\times 10^{-6}$ & 0.28 & 25.84 &      &\\
100 & $10^4$ & 5.924$\times 10^{-6}$   & 5.913$\times 10^{-6}$  & 1.45 & 5.923$\times 10^{-6}$ & 0.29 & 24.54 & 15.4 & 0.9\\
    & $10^5$ & (6.261$\times 10^{-6}$) & 5.928$\times 10^{-6}$  & 1.44 & 5.921$\times 10^{-6}$ & 0.29 & 24.20 &      &\\ \hline

    & $10^3$ &                        & 3.355$\times 10^{-10}$ & 1.48 & 3.378$\times 10^{-10}$ & 0.28 & 25.83 &      &\\
200 & $10^4$ & 3.371$\times 10^{-10}$  & 3.381$\times 10^{-10}$ & 1.46 & 3.368$\times 10^{-10}$ & 0.29 & 26.17 & 32.0 & 0.9\\
    & $10^5$ &(3.473$\times 10^{-10}$) & 3.370$\times 10^{-10}$ & 1.46 & 3.374$\times 10^{-10}$ & 0.28 & 26.92 &      &\\ \hline

    & $10^3$ &                        & 2.169$\times 10^{-14}$ & 1.46 & 2.180$\times 10^{-14}$ & 0.29 & 26.48 &      &\\
300 & $10^4$ & 2.176$\times 10^{-14}$  & 2.180$\times 10^{-14}$ & 1.47 & 2.175$\times 10^{-14}$ & 0.28 & 27.76 & 48.0 & 0.9\\
    & $10^5$ & (2.226$\times 10^{-14}$)& 2.173$\times 10^{-14}$ & 1.47 & 2.179$\times 10^{-14}$ & 0.28 & 27.89 &      &\\
    \hline
  \end{tabular}
  \caption{SP-IS method has a decreasing coefficient of variation (CoV) and it provides increasing variance reduction (VR) over the optimal state dependent exponential twisting (BGL) method. Computation time per sample (CT), reported in micro seconds, increases with $n$ for BGL method whereas it remains constant for SP-IS method.}
  \label{table:tab1d}
\end{table}

In this example, the true value of tail probability for different
values of $n$ is calculated using approximation of gamma density
function available in MATLAB. Variance reduction achieved by SP-IS
method over BGL method is reported. This increases with increasing
$n$. In addition, we note that the computation time per sample for BGL
method increases with $n$ whereas it remains constant for the SP-IS
method. Table~\ref{table:tab1d} shows
that the exact asymptotic  $c_n$ can differ significantly from the estimated value of the probability. As shown in Table~\ref{table:taboet}, this difference can be far more significant in multi-dimension settings, thus emphasizing the need for simulation
despite the existence of asymptotics for the rare quantities considered.

\section{Conclusions and Direction for Further Research}

In this paper we considered the rare event problem of efficient
estimation of the density function of the average of iid light tailed
random vectors evaluated away from their mean, and the tail
probability that this average takes a large deviation. In a single
dimension setting we also considered the estimation problem of
expected overshoot associated with a sum of iid random variables
taking a large deviations.  We used the well known saddle point
representations for these performance measures and applied importance
sampling to develop provably efficient unbiased estimation algorithms
that significantly improve upon the performance of the existing
algorithms in literature and are simple to implement.

In this paper we combined rare event simulation with the classical
theory of saddle point based approximations for tail events.  We hope
that this approach spurs research towards efficient estimation of much
richer class of rare event problems where saddle point approximations
are well known or are easily developed.

Another direction that is important for further research involves
relaxing Assumptions \ref{cn:finite} or \ref{limit:chfn} in our
analysis.  Then, our IS estimators may not have asymptotically
vanishing relative error but may have bounded relative error.  We
illustrate this briefly through a simple example below.  Note that
many intricate asymptotics developed by Iltis \cite{iltis} for
estimating $P[\bar{X}_n\in \mathcal{A}]$ correspond to cases where
Assumptions \ref{cn:finite} or \ref{limit:chfn} may not hold.

\begin{example}
  Let $(X_i: i \geq 1)$ be a sequence of independent rv's with
  distribution same as $X=(Z_1,Z_2)$, where $Z_1$ and $Z_2$ are
  uncorrelated standard normal rvs.  Suppose $\mathcal{A} :=
  \{(z_1,z_2)|z_1\geq z_2^2 + a\}$ for some $a>0$ (see Figure
  \ref{fig0}).
  \begin{figure}[t]
    \begin{center}
      \includegraphics[width=6cm]{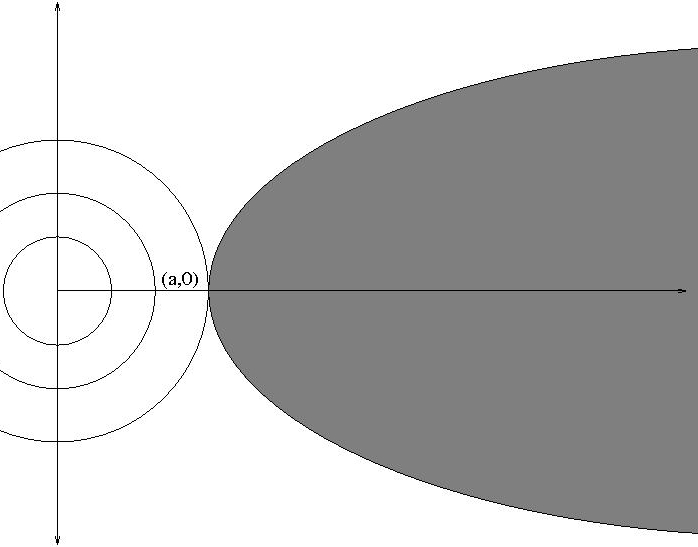}
      \caption{\label{fig0} $\mathcal{A} = \{(x^1,x^2)|x^1\geq (x^2)^2
        + a\}$.}
    \end{center}
  \end{figure}
  As $x_0$ we choose the point $(a,0)$ which is clearly the dominating
  point of the set $\mathcal{A}$.  Now for any $\theta_1>0$ and
  $\theta_2$ it can be shown that
$$c(n,\theta_1,\theta_2,a) = \int_{\{\sqrt{n}y_1\geq y_2^2\}}\exp\{-\sqrt{n}(\theta_1 y_1 + \theta_2 y_2)\}dy_1dy_2=\frac{\sqrt{\pi}\exp\{\frac{n\theta_2^2}{4\theta_1}\}}{\sqrt{n}\theta_1^{\frac{3}{2}}}.$$
Solving $\Lambda'(\theta_1,\theta_2) = (a,0)$ gives $\theta_1^*=a$ and
$\theta_2^*=0$.  Also
$$\rho_{n,\theta^*,x_0}(t) = \left(\frac{1}{1-\frac{\iota t_1}{a\sqrt{n}}}\right)^{\frac{3}{2}}\exp\left\{\frac{-t_2^2}{4(a-\frac{\iota t_1}{\sqrt{n}})}\right\}.$$
Therefore Assumption \ref{limit:chfn} fails to hold:
$$\lim_{n\rightarrow\infty}\rho_{n,\theta^*,x_0}(t)=\exp\left\{-\frac{t_2^2}{4a}\right\}.$$

\end{example}
Therefore, in this case the the family of estimator given by
(\ref{tail:estimator}) may not have asymptotically vanishing relative
error.  But, nevertheless, it can be shown to have bounded relative
error.  To see this, note that
$$\int_{v\in\Re^d}\rho_{x_0,\theta^*}(A(\theta^*)v)\phi(v)\,dv = \left(1 + \frac{1}{2a}\right)^{-\frac{1}{2}}$$
and
$$\int_{v\in\Re^d}\rho_{x_0,\theta^*}(A(\theta^*)v)^2\phi(v)\,dv = \left(1 + \frac{1}{a}\right)^{-\frac{1}{2}}.$$
(Here $\Lambda''(\theta)={1\,0 \choose 0\,1}$ for all $\theta$.  So
$A(\theta^*)={1\,0 \choose 0\,1}$.)  Also $\forall 1\leq i,j,k \leq d$
$$\int_{v\in\Re^d}v_iv_jv_k \rho_{x_0,\theta^*}(A(\theta^*)v)\phi(v)\,dv = 0 = \int_{v\in\Re^d}v_iv_jv_k \rho_{x_0,\theta^*}(A(\theta^*)v)^2\phi(v)\,dv .$$
Therefore as in Proposition \ref{asymptotic3}, it follows that
$$P[\bar{X}_n\in \mathcal{A}]\sim \frac{e^{-\frac{na^2}{2}}}{2\sqrt{\pi}\sqrt{n}a^{\frac{3}{2}}}\times\left(1 + \frac{1}{2a}\right)^{-\frac{1}{2}}.$$
Mimicking the proof of Theorem (\ref{mainthm2}) it can be established
that
$$Var_n\left[\hat {P}[\bar{X}_n\in \mathcal{A}]\right]\rightarrow \frac{1 + \frac{1}{2a}}{\sqrt{1 + \frac{1}{a}}}-1.$$

\appendix

\section{Proofs}
\begin{proof}(of Proposition \ref{asymptotic1})\\
Let $\zeta_3(\theta^*)=\Lambda'''(\theta^*)\star A(\theta^*)$.
We have
\begin{eqnarray*}
\left|\int_{v\in\Re^d}\psi(n^{-\frac{1}{2}}A(\theta^*)v,\theta^*,n)\phi(v)dv-1\right|&=&\left|\int_{v\in\Re^d}\{\psi(n^{-\frac{1}{2}}A(\theta^*)v,\theta^*,n)-1\}\phi(v)dv\right|\\
&=&\left|\int_{v\in\Re^d}\left\{\psi(n^{-\frac{1}{2}}A(\theta^*)v,\theta^*,n)-1-\frac{\zeta_3(\theta^*)}{6\sqrt{n}}\odot(\iota v)\right\}\phi(v)dv\right|\\
&\leq&\int_{v\in\Re^d}\left|\psi(n^{-\frac{1}{2}}A(\theta^*)v,\theta^*,n)-1-\frac{\zeta_3(\theta^*)}{6\sqrt{n}}\odot(\iota v)\right|\phi(v)dv\\&=&
\frac{1}{(2\pi)^{\frac{d}{2}}}(I_1+I_2)\,,
\end{eqnarray*}
where
$$I_1=\int_{|n^{-\frac{1}{2}}A(\theta^*)v|<\delta}\left|\exp\left\{n\times\eta(n^{-\frac{1}{2}}A(\theta^*)v,\theta^*)\right\}-1-n\frac{\Lambda'''(\theta^*)}{3!}\odot\left(\iota n^{-\frac{1}{2}}A(\theta^*)v\right)\right|\exp\left\{-\frac{v^2}{2}\right\}\,dv,$$
$$I_2=\int_{|n^{-\frac{1}{2}}A(\theta^*)v|\geq\delta}\left|\exp\left\{n\times\eta(n^{-\frac{1}{2}}A(\theta^*)v,\theta^*)\right\}-1-n\frac{\Lambda'''(\theta^*)}{3!}\odot\left(\iota n^{-\frac{1}{2}}A(\theta^*)v\right)\right|\exp\left\{-\frac{v^2}{2}\right\}\,dv.$$

We now discuss how the $\delta$ above may be selected.

Since $\eta'''$ is continuous, it follows from the three term Taylor series expansion,
$$\eta(v,\theta)=\eta(0 ,\theta)+\eta'(0 ,\theta)v+\frac{1}{2}(v)^T\eta''(0,\theta)v+\frac{1}{6}\eta'''(\tilde{v} ,\theta)\odot v$$
(where $\tilde{v}$ is between $v$ and the origin),  (\ref{property1}) and (\ref{property2})  that for any given $\epsilon$ we
can choose $\delta$ small enough so that
$$|\eta(v,\theta^*)-\frac{1}{3!}\eta'''(0,\theta^*)\odot v|\leq \epsilon(\kappa_{min})^{\frac{3}{2}}|v|^3\,\,\,\,\text{for}\,\,|v|<\delta,$$
or equivalently
\begin{equation}
\label{estimate1}
|\eta(v,\theta^*)-\frac{1}{3!}\Lambda'''(\theta^*)\odot (\iota v)|\leq \epsilon(\kappa_{min})^{\frac{3}{2}}|v|^3\,\,\,\,\text{for}\,\,|v|<\delta\,.
\end{equation}

Since
\begin{equation}
\label{estimate2}
\left|\frac{1}{3!}\Lambda'''(\theta^*)\odot (\iota v)\right|<\frac{1}{8}\kappa_{min}|v|^2
\end{equation}
and
\begin{equation}
\label{estimate3}
|\eta(v,\theta^*)|<\frac{1}{8}\kappa_{min}|v|^2
\end{equation}
for all $|v|$ sufficiently small, we choose $\delta$ so that (\ref{estimate2}) and (\ref{estimate3}) also hold for
$|v|<\delta$.

We apply Lemma (\ref{keylemma}) with
$$\lambda=n\times\eta\left(n^{-\frac{1}{2}}A(\theta^*)v,\theta^*\right)\,\,\,\text{and}\,\,\,\beta=n\frac{\Lambda'''(\theta^*)}{3!}\odot\left(\iota n^{-\frac{1}{2}}A(\theta^*)v\right).$$
Since $\frac{|\beta|^2}{2}=\frac{1}{n}P(v)$, where $P$ is a homogeneous polynomial
with coefficients independent of $n$ and for $|n^{-\frac{1}{2}}A(\theta^*)v|<\delta$ we have from (\ref{estimate3}),
(\ref{estimate2}) and (\ref{estimate1}),
respectively,

$$|\lambda|=n\left|\eta\left(n^{-\frac{1}{2}}A(\theta^*)v,\theta^*\right)\right|<n\frac{1}{8}\kappa_{min}|n^{-\frac{1}{2}}A(\theta^*)v|^2\leq\frac{1}{8}\kappa_{min}||A(\theta^*)||^2|v|^2=\frac{|v|^2}{8}\,\,,$$
$$|\beta|=n\left|\frac{1}{3!}\Lambda'''(\theta^*)\odot\left(\iota n^{-\frac{1}{2}}A(\theta^*)v\right)\right|<n\frac{1}{8}\kappa_{min}|n^{-\frac{1}{2}}A(\theta^*)v|^2\leq\frac{1}{8}\kappa_{min}||A(\theta^*)||^2|v|^2=\frac{|v|^2}{8}$$
and
$$|\lambda-\beta|=n\left|\eta\left(n^{-\frac{1}{2}}A(\theta^*)v,\theta^*\right)-\frac{1}{3!}\Lambda'''(\theta^*)\odot\left(\iota n^{-\frac{1}{2}}A(\theta^*)v\right)\right|<n\epsilon(\kappa_{min})^{\frac{3}{2}}|n^{-\frac{1}{2}}A(\theta^*)v|^3\leq\frac{\epsilon|v|^3}{\sqrt{n}}\,.$$
From Lemma (\ref{keylemma}) it now follows that the integrand in $I_1$ is dominated by
$$\exp\left\{\frac{v^2}{8}\right\}\times\left(\frac{\epsilon|v|^3}{\sqrt{n}}+\frac{1}{n}P(v)\right)\times \exp\left\{-\frac{v^2}{2}\right\}
=\exp\left\{-\frac{3v^2}{8}\right\}\left(\frac{\epsilon|v|^3}{\sqrt{n}}+\frac{1}{n}P(v)\right).$$
Since $\epsilon$ is arbitrary we have $I_1=o(n^{-\frac{1}{2}})$.

Next we have
\begin{eqnarray*}
I_2&\leq&\int_{|n^{-\frac{1}{2}}A(\theta^*)v|\geq\delta}\left|\exp\left\{-\frac{v^2}{2}\right\}\psi(n^{-\frac{1}{2}}A(\theta^*)v,\theta^*,n)\right|\,dv\\
&+&\int_{|n^{-\frac{1}{2}}A(\theta^*)v|\geq\delta}\left(1+\left|\frac{\zeta_3(\theta^*)\odot v}{6}\right|\right)\exp\left\{-\frac{v^2}{2}\right\}\,dv,\\
&=&\int_{|A(\theta^*)v|\geq\delta\sqrt{n}}\left|\varphi_{\theta^*}\left(n^{-\frac{1}{2}}A(\theta^*)v\right)\right|^n\,dv+\int_{|A(\theta^*)v|\geq\delta\sqrt{n}}\left(1+\left|\frac{\zeta_3(\theta^*)\odot v}{6}\right|\right)\exp\left\{-\frac{v^2}{2}\right\}\,dv.
\end{eqnarray*}
Let $q_{\delta}<1$ be such that $|\varphi_{\theta^*}(v)|<q_{\delta}$ for $|v|\geq \delta$.
Then we have
\begin{eqnarray*}
I_2&\leq& q_\delta^{n-\gamma}\int_{v\in\Re^d}\left|\varphi_{\theta^*}\left(n^{-\frac{1}{2}}A(\theta^*)v\right)\right|^\gamma\,dv+\int_{|A(\theta^*)v|\geq\delta\sqrt{n}}\left(1+\left|\frac{\zeta_3(\theta^*)\odot v}{6}\right|\right)\exp\left\{-\frac{v^2}{2}\right\}\,dv,\\
&=& q_{\delta}^{n-\gamma}n^{\frac{d}{2}}\sqrt{|\Lambda''(\theta^*)|}\int_{v\in\Re^d}\left|\varphi_{\theta^*}(u)\right|^\gamma\,du+\int_{|A(\theta^*)v|\geq\delta\sqrt{n}}\left(1+\left|\frac{\zeta_3(\theta^*)\odot v}{6}\right|\right)\exp\left\{-\frac{v^2}{2}\right\}\,dv.
\end{eqnarray*}
It follows that $I_2=o(n^{-\alpha})$ for any $\alpha$.
\end{proof}

\begin{proof} (of Theorem \ref{mainthm2})\\
The proof follows along the same line as proof of Theorem \ref{mainthm0}.
We write
$$\int_{v\in\Re^d}\frac{\rho_{n,\theta^*,x_0}^2(A(\theta^*)v)\psi^2(n^{-\frac{1}{2}}A(\theta^*)v,\theta^*,n)\phi^2(v)}{g_n(v)}\,dv=I_5+I_6$$
where
\begin{eqnarray*}
I_5&=&\int_{|v|<\delta_2(n)\sqrt{n}}\frac{\rho_{n,\theta^*,x_0}^2(A(\theta^*)v)\psi^2(n^{-\frac{1}{2}}A(\theta^*)v,\theta^*,n)\phi^2(v)}{g_n(v)}\,dv\\
   &=&\frac{1}{b_n}\int_{|v|<\delta_2(n)\sqrt{n}}\rho_{n,\theta^*,x_0}^2(A(\theta^*)v)\psi^2(n^{-\frac{1}{2}}A(\theta^*)v,\theta^*,n)\phi(v)\,dv.
\end{eqnarray*}
\begin{eqnarray*}
I_6&=&\int_{|v|\geq\delta_2(n)\sqrt{n}}\frac{\rho_{n,\theta^*,x_0}^2(A(\theta^*)v)\psi^2(n^{-\frac{1}{2}}A(\theta^*)v,\theta^*,n)\phi^2(v)}{g_n(v)}\,dv\\
   &=&\frac{1}{C_{n}}\int_{|v|\geq\delta_2(n)\sqrt{n}}\rho_{n,\theta^*,x_0}^2(A(\theta^*)v)|v|^\alpha\psi^2(n^{-\frac{1}{2}}A(\theta^*)v,\theta^*,n)\phi^2(v)\,dv.
\end{eqnarray*}
Now
\begin{eqnarray*}
|I_5 - 1|&=&\left|\frac{1}{b_n}\int_{|v|<\delta_2(n) \sqrt{n}}\rho_{n,\theta^*,x_0}^2(A(\theta^*)v)\psi^2(n^{-\frac{1}{2}}A(\theta^*)v,\theta^*,n)\phi(v)\,dv-1\right|\\
&\leq&\frac{1}{b_n}\left|\int_{|v|<\delta_2(n) \sqrt{n}}\rho_{n,\theta^*,x_0}^2(A(\theta^*)v)\left\{\psi^2(n^{-\frac{1}{2}}A(\theta^*)v,\theta^*,n)-1\right\}\phi(v)\,dv\right|+o(1)\\
&\leq&\frac{1}{b_n}\left|\int_{|v|<\delta_2(n) \sqrt{n}}\rho_{n,\theta^*,x_0}^2(A(\theta^*)v)\left\{\psi^2(n^{-\frac{1}{2}}A(\theta^*)v,\theta^*,n)-1-\frac{\zeta_3(\theta^*)}{3\sqrt{n}}\odot(\iota v)\right\}\phi(v)\,dv\right| + o(1)\\
&\leq&\frac{1}{b_n}\int_{|v|<\delta_2(n) \sqrt{n}}\left|\rho_{n,\theta^*,x_0}(A(\theta^*)v)\right|^2\left|\psi^2(n^{-\frac{1}{2}}A(\theta^*)v,\theta^*,n)-1-\frac{\zeta_3(\theta^*)}{3\sqrt{n}}\odot(\iota v)\right|\phi(v)\,dv + o(1)\\
&\leq&\frac{1}{b_n}\int_{|v|<\delta_2(n) \sqrt{n}}\left|\psi^2(n^{-\frac{1}{2}}A(\theta^*)v,\theta^*,n)-1-\frac{\zeta_3(\theta^*)}{3\sqrt{n}}\odot(\iota v)\right|\phi(v)\,dv + o(1).
\end{eqnarray*}
Now as in the case of Theorem \ref{mainthm0} we conclude that  $I_5= 1 + o(n^{-\frac{1}{2}})$.
Also, since
\begin{eqnarray*}
|I_6|&\leq&\frac{1}{C_{n}}\int_{|A(\theta^*)v|\geq\delta_2(n)\sqrt{n}}|v|^\alpha\left|\rho_{n,\theta^*,x_0}(A(\theta^*)v)\right|^2\left|\psi^2(n^{-\frac{1}{2}}A(\theta^*)v,\theta^*,n)\right|\phi^2(v)\,dv\\
     &\leq&\frac{1}{(2\pi)^d C_{n}}\int_{|A(\theta^*)v|\geq\delta_2(n)\sqrt{n}}|v|^\alpha\left|\exp\left\{-v^2\right\}\psi^2(n^{-\frac{1}{2}}A(\theta^*)v,\theta^*,n)\right|\,dv,
\end{eqnarray*}
we  conclude that $I_6\rightarrow0$ as $n\rightarrow\infty$ proving the theorem.
\end{proof}


%
%
%
%

\end{document}